\newcommand{\A}{{\cal A}}
\newcommand{\abs}[1]{\left|#1\right|}
\newcommand{\bdry}[1]{\partial #1}
\newcommand{\closure}[1]{\overline{#1}}
\newcommand{\dint}{\ds{\int}}
\newcommand{\dist}[2]{\text{dist}\, (#1,#2)}
\newcommand{\ds}[1]{\displaystyle #1}
\newcommand{\eps}{\varepsilon}
\newcommand{\F}{{\cal F}}
\newcommand{\hquad}{\hspace{0.08in}}
\newcommand{\id}[1]{id_{#1}}
\newcommand{\incl}{\subset}
\newcommand{\N}{\mathbb N}
\newcommand{\norm}[2][]{\left\|#2\right\|_{#1}}
\renewcommand{\O}{\text{O}}
\renewcommand{\o}{\text{o}}
\newcommand{\PS}[1]{$(\text{PS})_{#1}$}
\newcommand{\pnorm}[2][]{\if #1'' \left|#2\right|_p \else \left|#2\right|_{#1} \fi}
\newcommand{\R}{\mathbb R}
\newcommand{\RP}{\R \text{P}}
\newcommand{\restr}[2]{\left.#1\right|_{#2}}
\newcommand{\seq}[1]{\left(#1\right)}
\newcommand{\set}[1]{\left\{#1\right\}}
\newcommand{\Z}{\mathbb Z}
\newenvironment{enumroman}{\begin{enumerate}

}{\end{enumerate}}
\newtheorem{corollary}{Corollary}[section]
\newtheorem{lemma}[corollary]{Lemma}
\newtheorem{proposition}[corollary]{Proposition}
\newtheorem{theorem}[corollary]{Theorem}
\theoremstyle{remark}
\numberwithin{equation}{section}
\title{\bf On a class of critical $N$-Laplacian problems\thanks{{\em MSC2010:} Primary 35J92, Secondary 35B33, 35B38
\newline \indent\; {\em Key Words and Phrases:} critical $N$-Laplacian problems, existence, critical points, linking, $\Z_2$-cohomological index}}
\author{\bf Tsz Chung Ho and Kanishka Perera\\
Department of Mathematical Sciences\\
Florida Institute of Technology\\
Melbourne, FL 32901, USA\\
\em tho2011@my.fit.edu \& kperera@fit.edu}
\date{}
\begin{document}

\maketitle

\begin{abstract}
We establish some existence results for a class of critical $N$-Laplacian problems in a bounded domain in $\R^N$. In the absence of a suitable direct sum decomposition of the underlying Sobolev space to which the classical linking theorem can be applied, we use an abstract linking theorem based on the $\Z_2$-cohomological index to obtain a nontrivial critical point.
\end{abstract}

\section{Introduction}

In this paper we establish some existence results for the class of critical $N$-Laplacian problems
\begin{equation} \label{1.1}
\left\{\begin{aligned}
- \Delta_N\, u & = h(u)\, e^{\alpha\, |u|^{N'}} && \text{in } \Omega\\[5pt]
u & = 0 && \text{on } \bdry{\Omega},
\end{aligned}\right.
\end{equation}
where $\Omega$ is a smooth bounded domain in $\R^N,\, N \ge 2$, $\alpha > 0$, $N' = N/(N - 1)$ is the H\"{o}lder conjugate of $N$, and $h$ is a continuous function such that
\begin{equation} \label{1.1.1}
\lim_{|t| \to \infty}\, h(t) = 0
\end{equation}
and
\begin{equation} \label{1.2}
0 < \beta := \liminf_{|t| \to \infty}\, th(t) < \infty.
\end{equation}
This problem is motivated by the Trudinger-Moser inequality
\begin{equation} \label{1.3}
\sup_{\substack{u \in W^{1,N}_0(\Omega)\\ \norm{u} \le 1}}\, \int_\Omega e^{\alpha_N\, |u|^{N'}} dx < \infty,
\end{equation}
where $W^{1,N}_0(\Omega)$ is the usual Sobolev space with the norm
\begin{gather*}
\norm{u} = \left(\int_\Omega |\nabla u|^N dx\right)^{1/N},\\[7.5pt]
\alpha_N = N\, \omega_{N-1}^{1/(N-1)},
\end{gather*}
and
\[
\omega_{N-1} = \frac{2 \pi^{N/2}}{\Gamma(N/2)}
\]
is the area of the unit sphere in $\R^N$ (see Trudinger \cite{MR0216286} and Moser \cite{MR0301504}). Problem \eqref{1.1} is critical with respect to this inequality and hence lacks compactness. Indeed, the associated variational functional satisfies the Palais-Smale compactness condition only at energy levels below a certain threshold (see Proposition \ref{Proposition 2.1} in the next section).

In dimension $N = 2$, problem \eqref{1.1} is semilinear and has been extensively studied in the literature (see, e.g., \cite{MR1044289,MR1386960,MR1399846,MR2772124}). In dimensions $N \ge 3$, this problem is quasilinear and has been studied mainly when
\begin{equation} \label{1.20}
G(t) := \int_0^t h(s)\, e^{\alpha\, |s|^{N'}} ds \le \lambda\, |t|^N \quad \text{for small } t
\end{equation}
for some $\lambda \in (0,\lambda_1)$ (see, e.g., \cite{MR1079983,MR1865413,MR1392090}). Here
\begin{equation} \label{1.5}
\lambda_1 = \inf_{u \in W^{1,N}_0(\Omega) \setminus \set{0}}\, \frac{\dint_\Omega |\nabla u|^N dx}{\dint_\Omega |u|^N dx}
\end{equation}
is the first eigenvalue of the eigenvalue problem
\begin{equation} \label{1.4}
\left\{\begin{aligned}
- \Delta_N\, u & = \lambda\, |u|^{N-2}\, u && \text{in } \Omega\\[5pt]
u & = 0 && \text{on } \bdry{\Omega}.
\end{aligned}\right.
\end{equation}
The case $h(t) = \lambda\, |t|^{N-2}\, t$ with $\lambda > 0$, for which $\beta = \infty$, was recently studied in Yang and Perera \cite{MR3616328}. The remaining case, where $N \ge 3$, $\lambda \ge \lambda_1$, and $\beta < \infty$, does not seem to have been studied in the literature. This case is covered in our results here, which are for large $\beta < \infty$ and allow $N \ge 3$ and $\lambda \ge \lambda_1$ in \eqref{1.20}.

Let $d$ be the radius of the largest open ball contained in $\Omega$. Our first result is the following theorem.

\begin{theorem} \label{Theorem 1.1}
Assume that $\alpha > 0$, $h$ satisfies \eqref{1.1.1} and \eqref{1.2}, and $G$ satisfies
\begin{gather}
\label{1.9} G(t) \ge - \frac{1}{N}\, \sigma_0\, |t|^N \quad \text{for } t \ge 0,\\[7.5pt]
\label{1.7} G(t) \le \frac{1}{N}\, (\lambda_1 - \sigma_1)\, |t|^N \quad \text{for } |t| \le \delta
\end{gather}
for some $\sigma_0 \ge 0$ and $\sigma_1, \delta > 0$. If
\begin{equation} \label{1.10}
\beta > \frac{1}{N \alpha^{N-1}} \left(\frac{N}{d}\right)^N\! e^{\sigma_0/(N-1)\, \kappa},
\end{equation}
where $\kappa = \dfrac{1}{N!} \left(\dfrac{N}{d}\right)^N$, then problem \eqref{1.1} has a nontrivial solution.
\end{theorem}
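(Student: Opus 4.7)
The plan is to apply an abstract linking theorem based on the $\Z_2$-cohomological index to the energy functional
\[
I(u) = \frac{1}{N}\int_\Omega |\nabla u|^N\, dx - \int_\Omega G(u)\, dx
\]
on $W^{1,N}_0(\Omega)$, which is $C^1$ by the Trudinger-Moser inequality \eqref{1.3} and whose critical points are weak solutions of \eqref{1.1}. Since \eqref{1.20} is allowed with $\lambda \ge \lambda_1$, no direct sum decomposition of the Sobolev space is available, which is the reason for passing to the cohomological index framework.

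\emph{Local positivity.} Using \eqref{1.7} together with the variational characterization \eqref{1.5}, and observing that the superexponential remainder produces only a term of order $\o(\norm{u}^N)$ as $\norm{u} \to 0$, I would show that there exist $\rho, \alpha_0 > 0$ with $I \ge \alpha_0$ on the sphere $S_\rho := \set{u \in W^{1,N}_0(\Omega) : \norm{u} = \rho}$; this supplies the positivity side of the linking.

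\emph{Descending direction via Moser functions.} Let $B_d(x_0) \subset \Omega$ be a ball realizing the inradius $d$, and let $m_n \in W^{1,N}_0(B_d(x_0))$ be the classical Moser sequence normalized by $\norm{m_n} = 1$. Assumption \eqref{1.2} forces $G(t)$ to grow essentially like $(\beta/\alpha N')\, t^{-N'}\, e^{\alpha\, t^{N'}}$ as $t \to +\infty$, so $I(t\, m_n) \to -\infty$ as $t \to \infty$ for each fixed $n$. Consequently, the half-line $\set{t\, m_n : t \ge 0}$ truncated at a suitably large $R_n$ links $S_\rho$ in the sense of the $\Z_2$-cohomological index, and the associated minimax value $c$ satisfies $\alpha_0 \le c$.

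\emph{Subcritical minimax level — the main obstacle.} The crux is the strict bound
\[
\sup_{0 \le t \le R_n}\, I(t\, m_n) < \frac{1}{N}\left(\frac{\alpha_N}{\alpha}\right)^{\! N-1}
\]
for some $n$, since \PS{c} is known to hold for $I$ below this Trudinger-Moser threshold by standard concentration-compactness arguments. Denoting by $t_n$ the maximizer in $t$ and balancing $t_n^N/N$ against the dominant contribution of $\int G(t_n\, m_n)$ on the plateau $\set{|x - x_0| \le d/n}$ of $m_n$, one is forced to $\alpha\, t_n^{N'}\, m_n(x_0)^{N'} = \log n + \O(1)$. Inserting the lower bound coming from \eqref{1.2} on the plateau rewrites the essential contribution as the integral $\int_0^1 n\, e^{-n(t - t^{N'})}\, dt$, whose limit is precisely $\M$, with prefactor $\beta\, \alpha^{N-1}(d/N)^N$. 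The annular region $d/n \le |x - x_0| \le d$, where $m_n$ is moderate in size, is handled by \eqref{1.9}, producing the multiplicative correction $e^{\sigma_0/((N-1)\kappa)}$ with $\kappa = (1/N!)(N/d)^N$ arising from $\int_\Omega |m_n|^N dx$ on the plateau. Hypothesis \eqref{1.10} is precisely the inequality needed for the resulting limit to strictly undercut $(1/N)(\alpha_N/\alpha)^{N-1}$. I expect this asymptotic bookkeeping, and in particular the extraction of the exact constants $\M$ and $\kappa$, to be the most delicate part of the proof.

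\emph{Conclusion.} Given $0 < \alpha_0 \le c < (1/N)(\alpha_N/\alpha)^{N-1}$ and \PS{c}, the abstract linking theorem based on the $\Z_2$-cohomological index yields a critical point $u \neq 0$ at level $c$, which is the desired nontrivial solution of \eqref{1.1}.
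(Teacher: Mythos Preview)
Your substantive outline --- local positivity from \eqref{1.7}, decay along rays $t\,m_n$ from the growth of $G$, the subcritical level estimate via Moser's sequence with the constants $\M$ and $\kappa$ emerging exactly as you describe, and then the compactness at levels below $(1/N)(\alpha_N/\alpha)^{N-1}$ --- is correct and matches the paper's argument almost step by step (their Lemma~3.1, Lemma~3.2, Proposition~2.1).

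However, you have misidentified the variational framework. Hypothesis~\eqref{1.7} is precisely the condition $G(t) \le \frac{1}{N}(\lambda_1 - \sigma_1)|t|^N$ near $0$, so the origin \emph{is} a strict local minimum of $I$ and the classical mountain pass theorem suffices; the paper uses exactly that. Your sentence ``Since \eqref{1.20} is allowed with $\lambda \ge \lambda_1$, no direct sum decomposition is available'' contradicts \eqref{1.7} and conflates Theorem~1.1 with Theorem~1.3. The $\Z_2$-cohomological-index linking theorem (the paper's Theorem~2.2) is invoked only for Theorem~1.3, where the near-zero bound is $G(t)\le \frac{1}{N}(\lambda_k - \sigma_1)|t|^N$ with $k\ge 2$ and the origin need not be a local minimum. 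What you actually describe --- a half-line through Moser functions linking a small sphere --- is mountain pass, so the content is right even if the label is not.

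One further point to tighten: you assert that \PS{c} ``is known to hold'' below the threshold. The paper does not prove the full Palais--Smale condition; its Proposition~2.1 shows only that any \PS{c} sequence with $0<c<(1/N)(\alpha_N/\alpha)^{N-1}$ has a subsequence converging \emph{weakly} to a nontrivial solution. That is enough for existence --- the mountain pass theorem produces a \PS{c} sequence, and Proposition~2.1 extracts the solution --- but you should not claim strong compactness.
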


In particular, we have the following corollary for $\sigma_0 = 0$.

\begin{corollary} \label{Corollary 1.2}
Assume that $\alpha > 0$, $h$ satisfies \eqref{1.1.1} and \eqref{1.2}, and $G$ satisfies
\begin{gather*}
G(t) \ge 0 \quad \text{for } t \ge 0,\\[7.5pt]
G(t) \le \frac{1}{N}\, (\lambda_1 - \sigma_1)\, |t|^N \quad \text{for } |t| \le \delta
\end{gather*}
for some $\sigma_1, \delta > 0$. If
\[
\beta > \frac{1}{N \alpha^{N-1}} \left(\frac{N}{d}\right)^N,
\]
then problem \eqref{1.1} has a nontrivial solution.
\end{corollary}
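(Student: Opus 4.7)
The plan is to derive Corollary \ref{Corollary 1.2} as an immediate specialization of Theorem \ref{Theorem 1.1} with the parameter choice $\sigma_0 = 0$. There is essentially no new analytic content to produce; the work consists entirely of verifying that the hypotheses of the theorem are satisfied under the stronger assumptions of the corollary, and that the numerical threshold on $\beta$ reduces to the one stated.

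First I would check the hypotheses on $G$. The assumption $G(t) \ge 0$ for $t \ge 0$ is precisely the lower bound \eqref{1.9} of Theorem \ref{Theorem 1.1} with $\sigma_0 = 0$, since $-\frac{1}{N}\,\sigma_0\,|t|^N = 0$ in that case. The upper bound \eqref{1.7} is identical in both statements, with the same constants $\sigma_1,\delta > 0$. The conditions \eqref{1.1.1} and \eqref{1.2} on $h$ are carried over verbatim.

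Next I would reduce the threshold on $\beta$. Plugging $\sigma_0 = 0$ into the exponential factor appearing in \eqref{1.10} gives
\[
e^{\sigma_0/(N-1)\,\kappa} \;=\; e^{0} \;=\; 1,
\]
independently of the value of $\kappa = \frac{1}{N!}\left(\frac{N}{d}\right)^N$. Consequently the right-hand side of \eqref{1.10} collapses to $\frac{1}{\M\,\alpha^{N-1}}\left(\frac{N}{d}\right)^N$, which is exactly the bound assumed in Corollary \ref{Corollary 1.2}.

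With these identifications, Theorem \ref{Theorem 1.1} applies directly and yields a nontrivial solution of \eqref{1.1}. No obstacle is anticipated; the proof is simply the two-line observation that every hypothesis of the corollary is a special case of a hypothesis of the theorem and that the quantitative threshold matches. I would write the argument as a short paragraph concluding with "apply Theorem \ref{Theorem 1.1}."
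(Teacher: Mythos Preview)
Your proposal is correct and matches the paper's approach exactly: the paper states Corollary \ref{Corollary 1.2} as the special case $\sigma_0 = 0$ of Theorem \ref{Theorem 1.1} and gives no separate proof beyond that remark. Your verification that \eqref{1.9} reduces to $G(t)\ge 0$ and that the exponential factor in \eqref{1.10} collapses to $1$ is precisely the content of that specialization.
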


Corollary \ref{Corollary 1.2} should be compared with Theorem 1 of do {\'O} \cite{MR1392090}, where this result is proved under the stronger assumption $h(t) \ge 0$ for $t \ge 0$.

To state our second result, let $\seq{\lambda_k}$ be the sequence of eigenvalues of problem \eqref{1.4} based on the $\Z_2$-cohomological index that was introduced in Perera \cite{MR1998432} (see Proposition \ref{Proposition 2.3} in the next section). We have the following theorem.

\begin{theorem} \label{Theorem 1.3}
Assume that $\alpha > 0$, $h$ satisfies \eqref{1.1.1} and \eqref{1.2}, and $G$ satisfies
\begin{gather}
\label{1.11} G(t) \ge \frac{1}{N}\, (\lambda_{k-1} + \sigma_0)\, |t|^N \quad \forall t,\\[7.5pt]
\label{1.12} G(t) \le \frac{1}{N}\, (\lambda_k - \sigma_1)\, |t|^N \quad \text{for } |t| \le \delta
\end{gather}
for some $k \ge 2$ and $\sigma_0, \sigma_1, \delta > 0$. Then there exists a constant $c > 0$ depending on $\Omega$, $\alpha$, and $k$, but not on $\sigma_0$, $\sigma_1$, or $\delta$, such that if
\[
\beta > \frac{1}{\alpha^{N-1}} \left(\frac{N}{d}\right)^N\! e^{c/\sigma_0^{N-1}},
\]
then problem \eqref{1.1} has a nontrivial solution.
\end{theorem}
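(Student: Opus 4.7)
The plan is to apply the abstract linking theorem based on the $\Z_2$-cohomological index (cf.\ Section \ref{Subsection 2.3}) to the energy functional
\[
I(u) = \frac{1}{N} \int_\Omega |\nabla u|^N\, dx - \int_\Omega G(u)\, dx, \qquad u \in W^{1,N}_0(\Omega),
\]
whose critical points are weak solutions of \eqref{1.1}. The approach parallels the proof of Theorem \ref{Theorem 1.1}; the new ingredient is that the direct-sum decomposition associated to $\lambda_{k-1} < \lambda_k$ is not a genuine topological direct sum, which is exactly why one is forced to use the index-based linking rather than classical saddle point geometry.

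First I would establish the local Palais-Smale condition \PS{c} for $I$ at every level $c < c^* := (1/N)(\alpha_N/\alpha)^{N-1}$, the standard critical threshold coming from the Trudinger-Moser inequality \eqref{1.3}: \eqref{1.2} gives boundedness of (PS) sequences, while \eqref{1.1.1} together with a Lions-type concentration analysis rules out loss of compactness below $c^*$, as in do {\'O} \cite{MR1392090} and Yang-Perera \cite{MR3616328}. Next I would set up the linking using the index-based spectral machinery of \cite{MR1998432}: choose symmetric sets $\A_0, \F_0 \incl \set{u \in W^{1,N}_0(\Omega) : \norm{u} = 1}$ of cohomological index $\ge k-1$ and coindex respectively, such that the Rayleigh quotient is $\le \lambda_{k-1}$ on $\A_0$ and $\ge \lambda_k$ on $\F_0$, and such that $\R \cdot \A_0$ and a suitable cone over $\F_0$ link in the required sense. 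Condition \eqref{1.12} gives $I \ge c_0 > 0$ on a small sphere inside the cone over $\F_0$; condition \eqref{1.11} gives $I(u) \le -(1/N)\, \sigma_0\, \norm{u}^N + \o(\norm{u}^N)$ on $\R \cdot \A_0$. To close the linking, take a ball $B_d(x_0) \incl \Omega$ and the normalized Moser functions $m_n$ supported in $B_d(x_0)$, and form the linking set $\hat A$ as the truncated cone over $\A_0 \cap \closure{B_R(0)}$ with apex on the ray $\R_+\, m_n$.

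The main obstacle is verifying the critical estimate $\ds{\sup_{\hat A} I < c^*}$. On $\R \cdot \A_0$ it is immediate from \eqref{1.11}. On the pure Moser ray $\R_+\, m_n$, the classical Moser computation combined with the $\M$-integral identity of Theorem \ref{Theorem 1.1} yields $\sup_{t \ge 0} I(t\, m_n) < c^*$ once $\beta$ exceeds $\alpha^{-(N-1)}(N/d)^N$. The genuinely difficult case is the mixed element $v + t\, m_n$ with $v \in \A_0$ and $t > 0$. I would split $\Omega = B_d(x_0) \cup (\Omega \setminus B_d(x_0))$; on the outside $m_n$ vanishes and \eqref{1.11} with the strict gap $\sigma_0$ absorbs the contribution of $v$; on the inside, an $L^\infty$-estimate valid on the finite index set $\A_0$ (essentially by compactness up to scaling) bounds $\norm[\infty]{v}$ by a constant depending only on $\Omega$, $\alpha$, $k$, and $\norm{v}$. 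The interaction of the exponential factor in $G(v + t\, m_n)$ with this $L^\infty$-bound, modulated by the gap $\sigma_0$, is precisely what generates the factor $e^{c/\sigma_0^{N-1}}$ in the threshold on $\beta$. Once $\sup_{\hat A} I < c^*$ is secured, the linking theorem delivers a critical value in $(0, c^*)$, and by Step 1 the corresponding (PS) sequence yields an actual, hence nontrivial, solution of \eqref{1.1}.
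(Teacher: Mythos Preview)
Your overall architecture matches the paper's: compactness below $c^* = (1/N)(\alpha_N/\alpha)^{N-1}$ (Proposition \ref{Proposition 2.1}), the index-based linking of Theorem \ref{Theorem 2.2} with $A_0$ the compact $C^1(\overline\Omega)$-bounded set of index $k-1$ from Proposition \ref{Proposition 2.4} and $B_0 = \Psi_{\lambda_k}$, the lower bound on $B$ from \eqref{1.12} (Lemma \ref{Lemma 4.1}), and the nonpositivity on $\R_+ A_0$ from \eqref{1.11} (Lemma \ref{Lemma 4.2}\ref{Lemma 4.2 (i)}). The gap is in your handling of the mixed elements $sv + t\omega_j$ in the critical estimate.

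The domain splitting you propose does not work as stated: the inequality $\int_\Omega |\nabla v|^N \le \lambda_{k-1} \int_\Omega |v|^N$ for $v \in A_0$ is \emph{global}, so the gain $\sigma_0$ cannot be localized to $\Omega \setminus B_d(x_0)$ and made to ``absorb $v$ on the outside''; moreover the gradient cross-terms $|\nabla(sv + t\omega_j)|^N$ on $B_d$ are not accounted for. The paper instead runs a contradiction argument (Lemma \ref{Lemma 4.2}\ref{Lemma 4.2 (iii)}): assuming the supremum is $\ge c^*$ for every $j$, take maximizers $u_j = s_j v_j + t_j \omega_j$, set $\tau_j = s_j/t_j$, and expand $\norm{\tau_j v_j + \omega_j}^N$ and $\int_\Omega |\tau_j v_j + \omega_j|^N$ binomially, using the $C^1$-bound on $A_0$ together with Proposition \ref{Proposition 2.6} to control the cross-terms by powers of $(\log j)^{-1/N}$. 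Combining this with \eqref{1.11} yields $(\sigma_0/\lambda_{k-1})\tau_j^N + (t_0/t_j)^N \le 1 + c_3 \sum_{m=1}^N \tau_j^{N-m}(\log j)^{-m/N}$, from which $\tau_j \to 0$, $t_j \to t_0$, and crucially $\widetilde\tau_j := \tau_j(\log j)^{1/N} \le c_8/\sigma_0$. It is this $\sigma_0^{-1}$ bound on $\widetilde\tau_j$, fed into the exponent of $j$ in the lower estimate for $\int_{\{|u_j|>M_\eps\}} e^{\alpha|u_j|^{N'}}$, that produces $j^{-c/(\sigma_0^{N-1}\log j)} = e^{-c/\sigma_0^{N-1}}$ and hence the threshold on $\beta$. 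So the exponential factor $e^{c/\sigma_0^{N-1}}$ arises from a quantitative bound on how much of the maximizer can sit in the $A_0$-direction, not from a direct interaction of $G$ with $\|v\|_\infty$ on $B_d$.
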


Theorem \ref{Theorem 1.3} should be compared with Theorem 1.4 of de Figueiredo et al.\! \cite{MR1386960, MR1399846}, where this result is proved in the case $N = 2$ under the additional assumption that $0 < 2G(t) \le th(t)\, e^{\alpha t^2}$ for all $t \in \R \setminus \set{0}$. However, the linking argument used in \cite{MR1386960, MR1399846} is based on a splitting of $H^1_0(\Omega)$ that involves the eigenspaces of the Laplacian, and this argument does not extend to the case $N \ge 3$ where the $N$-Laplacian is a nonlinear operator and therefore has no linear eigenspaces. We will prove Theorem \ref{Theorem 1.3} using an abstract critical point theorem based on the $\Z_2$-cohomological index that was proved in Yang and Perera \cite{MR3616328} (see Section \ref{Subsection 2.4}).

In the proofs of Theorems \ref{Theorem 1.1} and \ref{Theorem 1.3}, the inner radius $d$ of $\Omega$ comes into play when verifying that certain minimax levels are below the compactness threshold given in Proposition \ref{Proposition 2.1}.

\section{Preliminaries}

\subsection{A compactness result}

Weak solutions of problem \eqref{1.1} coincide with critical points of the $C^1$-functional
\[
E(u) = \frac{1}{N} \int_\Omega |\nabla u|^N dx - \int_\Omega G(u)\, dx, \quad u \in W^{1,N}_0(\Omega).
\]
We recall that a \PS{c} sequence of $E$ is a sequence $\seq{u_j} \subset W^{1,N}_0(\Omega)$ such that $E(u_j) \to c$ and $E'(u_j) \to 0$. Proofs of Theorem \ref{Theorem 1.1} and Theorem \ref{Theorem 1.3} will be based on the following compactness result.

\begin{proposition} \label{Proposition 2.1}
Assume that $\alpha > 0$ and $h$ satisfies \eqref{1.1.1} and \eqref{1.2}. Then for all $c \ne 0$ satisfying
\[
c < \frac{1}{N} \left(\frac{\alpha_N}{\alpha}\right)^{N-1},
\]
every {\em \PS{c}} sequence of $E$ has a subsequence that converges weakly to a nontrivial solution of problem \eqref{1.1}.
\end{proposition}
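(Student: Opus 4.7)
\emph{Proof plan.}

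The argument proceeds in four stages: (i) boundedness of the \PS{c} sequence $(u_j)$ in $W^{1,N}_0(\Omega)$; (ii) extraction of a weakly convergent subsequence with a.e.\ convergence of the gradients; (iii) passage to the limit in the Euler--Lagrange equation using the subcritical energy bound on $c$; and (iv) verification of nontriviality of the weak limit.

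For stage (i), set $f(t) = h(t)\, e^{\alpha|t|^{N'}}$. Combining $E(u_j) = c + \o(1)$ with $\langle E'(u_j), u_j\rangle = \o(\norm{u_j})$ yields
\[
\int_\Omega \bigl[u_j\, f(u_j) - N\, G(u_j)\bigr] dx = Nc + \o(1) + \o(\norm{u_j}).
\]
A standard integration-by-parts asymptotic based on \eqref{1.1.1} and \eqref{1.2} gives $|G(t)| = \o(e^{\alpha|t|^{N'}})$ and $t f(t) \ge (\beta/2)\, e^{\alpha|t|^{N'}}$ for $|t|$ large, so the integrand satisfies $u_j f(u_j) - N G(u_j) \ge c_0\, e^{\alpha|u_j|^{N'}} - c_1$ for some $c_0 > 0$, $c_1 \ge 0$. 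This produces $\int_\Omega e^{\alpha|u_j|^{N'}}\, dx \le C + \o(\norm{u_j})$; using the companion bound $|G(t)| \le C\, e^{\alpha|t|^{N'}} + C$, the identity $\norm{u_j}^N = Nc + N\int_\Omega G(u_j)\, dx + \o(1)$ then gives $\norm{u_j}^N \le C + \o(\norm{u_j})$, forcing $(u_j)$ to be bounded.

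For stage (ii), extract a subsequence with $u_j \wto u$ in $W^{1,N}_0(\Omega)$, $u_j \to u$ in $L^p(\Omega)$ for every $p < \infty$, and $u_j \to u$ a.e. Because $\Delta_N$ is nonlinear, identifying $u$ as a weak solution demands almost-everywhere convergence of the gradients, which I would secure by the Boccardo--Murat argument: testing $E'(u_j)$ against a bounded truncation of $u_j - u$ together with strict monotonicity of $\xi \mapsto |\xi|^{N-2}\xi$ yields $\nabla u_j \to \nabla u$ in measure, hence a.e.\ along a further subsequence. Consequently $|\nabla u_j|^{N-2}\nabla u_j \wto |\nabla u|^{N-2}\nabla u$ in $L^{N'}(\Omega,\R^N)$, so the passage to the limit in $\langle E'(u_j), v\rangle \to 0$ against $v \in C^\infty_c(\Omega)$ reduces to showing $h(u_j)\, e^{\alpha|u_j|^{N'}} \to h(u)\, e^{\alpha|u|^{N'}}$ in $L^1_{\text{loc}}(\Omega)$.

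For stage (iii), the threshold $c < \frac{1}{N}(\alpha_N/\alpha)^{N-1}$ is essential: combined with the identity $\norm{u_j}^N = Nc + N\int_\Omega G(u_j)\, dx + \o(1)$ and a Brezis--Lieb decomposition of the norm, it yields (for $j$ large) the strict bound $\alpha\, \norm{u_j - u}^{N'} < \alpha_N$, so \eqref{1.3} produces $\int_\Omega e^{\alpha q\, |u_j|^{N'}} dx \le C$ for some $q > 1$. Vitali's theorem then delivers the required $L^1_{\text{loc}}$ convergence, so $u$ weakly solves \eqref{1.1}. Finally, for stage (iv), if $u \equiv 0$ then the uniform integrability just established forces $\int_\Omega G(u_j)\, dx \to 0$ and $\int_\Omega u_j h(u_j)\, e^{\alpha|u_j|^{N'}} dx \to 0$, whereupon the \PS{c} relations give $\norm{u_j}^N \to 0$ and $c = 0$, contradicting $c \ne 0$. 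The main obstacle I anticipate is stage (iii), namely propagating the subcritical energy bound, via Trudinger--Moser, into a strict $L^q$-bound on the exponential nonlinearity for some $q > 1$; this is precisely where the threshold $\frac{1}{N}(\alpha_N/\alpha)^{N-1}$ enters decisively.
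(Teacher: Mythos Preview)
Your outline and stages (i), (ii), (iv) align with the paper's argument. The substantive divergence is stage (iii), which you correctly flag as your main obstacle: you plan to deploy the threshold $c < \tfrac{1}{N}(\alpha_N/\alpha)^{N-1}$ together with a Brezis--Lieb splitting to obtain $\int_\Omega e^{\alpha q|u_j|^{N'}}\,dx \le C$ for some $q>1$, and only then pass to the limit via Vitali. The paper sidesteps this difficulty entirely by exploiting \eqref{1.1.1} more directly: since $h(t)\to 0$, one has $|h(t)e^{\alpha|t|^{N'}}| \le \eps\,e^{\alpha|t|^{N'}} + C_\eps$ for every $\eps>0$, and the $L^1$ bound $\sup_j \int_\Omega e^{\alpha|u_j|^{N'}}\,dx < \infty$ already obtained in stage (i) then suffices for uniform integrability of $v\,h(u_j)e^{\alpha|u_j|^{N'}}$ and of $G(u_j)$. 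Vitali delivers the limit with no $q>1$ bound, no Brezis--Lieb, and no use of the threshold. The threshold is invoked only in stage (iv), where the contradiction hypothesis $u=0$ gives $\int_\Omega G(u_j)\,dx \to 0$ and hence $\norm{u_j}^N \to Nc < (\alpha_N/\alpha)^{N-1}$; \emph{that} is what produces the $q>1$ exponential integrability needed to force $\int_\Omega u_j h(u_j) e^{\alpha|u_j|^{N'}}\,dx \to 0$. Your Lions-type route could probably be made rigorous, but it is more delicate than necessary; the paper's organization confines the use of the threshold to the nontriviality step, where the extra information $\norm{u_j}^N \to Nc$ is available for free. (Incidentally, your stage (ii), invoking Boccardo--Murat for a.e.\ gradient convergence, is more careful than the paper, which passes to the limit in $\int_\Omega |\nabla u_j|^{N-2}\nabla u_j\cdot\nabla v\,dx$ without comment.)
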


\begin{proof}
Let $\seq{u_j} \subset W^{1,N}_0(\Omega)$ be a \PS{c} sequence of $E$. Then
\begin{equation} \label{2.1}
E(u_j) = \frac{1}{N} \norm{u_j}^N - \int_\Omega G(u_j)\, dx = c + \o(1)
\end{equation}
and
\begin{equation} \label{2.2}
E'(u_j)\, u_j = \norm{u_j}^N - \int_\Omega u_j\, h(u_j)\, e^{\alpha\, |u_j|^{N'}} dx = \o(\norm{u_j}).
\end{equation}
First we show that $\seq{u_j}$ is bounded in $W^{1,N}_0(\Omega)$. Multiplying \eqref{2.1} by $2N$ and subtracting \eqref{2.2} gives
\[
\norm{u_j}^N + \int_\Omega \left(u_j\, h(u_j)\, e^{\alpha\, |u_j|^{N'}} - 2N G(u_j)\right) dx = 2Nc + \o(\norm{u_j} + 1),
\]
so it suffices to show that $th(t)\, e^{\alpha\, |t|^{N'}} - 2N G(t)$ is bounded from below. Let $0 < \eps < \beta/(2N + 1)$. By \eqref{1.1.1} and \eqref{1.2}, for some constant $C_\eps > 0$,
\begin{equation} \label{2.5}
|G(t)| \le \eps\, e^{\alpha\, |t|^{N'}} + C_\eps
\end{equation}
and
\begin{equation} \label{2.3}
th(t)\, e^{\alpha\, |t|^{N'}} \ge (\beta - \eps)\, e^{\alpha\, |t|^{N'}} - C_\eps
\end{equation}
for all $t$. So
\[
th(t)\, e^{\alpha\, |t|^{N'}} - 2N G(t) \ge [\beta - (2N + 1)\, \eps]\, e^{\alpha\, |t|^{N'}} - (2N + 1)\, C_\eps,
\]
which is bounded from below.

Since $\seq{u_j}$ is bounded in $W^{1,N}_0(\Omega)$, a renamed subsequence converges to some $u$ weakly in $W^{1,N}_0(\Omega)$, strongly in $L^p(\Omega)$ for all $p \in [1,\infty)$, and a.e.\! in $\Omega$. We have
\begin{equation} \label{2.6}
E'(u_j)\, v = \int_\Omega |\nabla u_j|^{N-2}\, \nabla u_j \cdot \nabla v\, dx - \int_\Omega v\, h(u_j)\, e^{\alpha\, |u_j|^{N'}} dx \to 0
\end{equation}
for all $v \in W^{1,N}_0(\Omega)$. By \eqref{1.1.1}, given any $\eps > 0$, there exists a constant $C_\eps > 0$ such that
\begin{equation} \label{2.7}
|h(t)\, e^{\alpha\, |t|^{N'}}| \le \eps\, e^{\alpha\, |t|^{N'}} + C_\eps \quad \forall t.
\end{equation}
By \eqref{2.2},
\[
\sup_j\, \int_\Omega u_j\, h(u_j)\, e^{\alpha\, |u_j|^{N'}} dx < \infty,
\]
which together with \eqref{2.3} gives
\begin{equation} \label{2.8}
\sup_j\, \int_\Omega e^{\alpha\, |u_j|^{N'}} dx < \infty.
\end{equation}
For $v \in C^\infty_0(\Omega)$, it follows from \eqref{2.7} and \eqref{2.8} that the sequence $(v\, h(u_j)\, e^{\alpha\, |u_j|^{N'}})$ is uniformly integrable and hence
\[
\int_\Omega v\, h(u_j)\, e^{\alpha\, |u_j|^{N'}} dx \to \int_\Omega v\, h(u)\, e^{\alpha\, |u|^{N'}} dx
\]
by Vitali's convergence theorem, so it follows from \eqref{2.6} that
\[
\int_\Omega |\nabla u|^{N-2}\, \nabla u \cdot \nabla v\, dx - \int_\Omega v\, h(u)\, e^{\alpha\, |u|^{N'}} dx = 0.
\]
Then this holds for all $v \in W^{1,N}_0(\Omega)$ by density, so the weak limit $u$ is a solution of problem \eqref{1.1}.

Suppose that $u = 0$. Then
\[
\int_\Omega G(u_j)\, dx \to 0
\]
since \eqref{2.5} and \eqref{2.8} imply that the sequence $\seq{G(u_j)}$ is uniformly integrable, so \eqref{2.1} gives $c \ge 0$ and
\begin{equation} \label{2.9}
\norm{u_j} \to (Nc)^{1/N}.
\end{equation}
Let $Nc < \nu < (\alpha_N/\alpha)^{N-1}$. Then $\norm{u_j} \le \nu^{1/N}$ for all $j \ge j_0$ for some $j_0$. Let $q = \alpha_N/\alpha \nu^{1/(N-1)} > 1$. By the H\"{o}lder inequality,
\[
\abs{\int_\Omega u_j\, h(u_j)\, e^{\alpha\, |u_j|^{N'}} dx} \le \left(\int_\Omega |u_j\, h(u_j)|^p\, dx\right)^{1/p}\! \left(\int_\Omega e^{q \alpha\, |u_j|^{N'}} dx\right)^{1/q},
\]
where $1/p + 1/q = 1$. The first integral on the right-hand side converges to zero since $h$ is bounded and $u_j \to 0$ in $L^p(\Omega)$, and the second integral is bounded by \eqref{1.3} since $q \alpha\, |u_j|^{N'} = \alpha_N\, |\widetilde{u}_j|^{N'}$, where $\widetilde{u}_j = u_j/\nu^{1/N}$ satisfies $\norm{\widetilde{u}_j} \le 1$ for $j \ge j_0$, so
\[
\int_\Omega u_j\, h(u_j)\, e^{\alpha\, |u_j|^{N'}} dx \to 0.
\]
Then $u_j \to 0$ by \eqref{2.2} and hence $c = 0$ by \eqref{2.9}, contrary to assumption. So $u$ is a nontrivial solution.
\end{proof}

\subsection{$\Z_2$-cohomological index}

The $\Z_2$-cohomological index of Fadell and Rabinowitz \cite{MR0478189} is defined as follows. Let $W$ be a Banach space and let $\A$ denote the class of symmetric subsets of $W \setminus \set{0}$. For $A \in \A$, let $\overline{A} = A/\Z_2$ be the quotient space of $A$ with each $u$ and $-u$ identified, let $f : \overline{A} \to \RP^\infty$ be the classifying map of $\overline{A}$, and let $f^\ast : H^\ast(\RP^\infty) \to H^\ast(\overline{A})$ be the induced homomorphism of the Alexander-Spanier cohomology rings. The cohomological index of $A$ is defined by
\[
i(A) = \begin{cases}
\sup \set{m \ge 1 : f^\ast(\omega^{m-1}) \ne 0}, & A \ne \emptyset\\[5pt]
0, & A = \emptyset,
\end{cases}
\]
where $\omega \in H^1(\RP^\infty)$ is the generator of the polynomial ring $H^\ast(\RP^\infty) = \Z_2[\omega]$. For example, the classifying map of the unit sphere $S^{m-1}$ in $\R^m,\, m \ge 1$ is the inclusion $\RP^{m-1} \incl \RP^\infty$, which induces isomorphisms on $H^q$ for $q \le m - 1$, so $i(S^{m-1}) = m$.

The following proposition summarizes the basic properties of the cohomological index (see Fadell and Rabinowitz \cite{MR0478189}).

\begin{proposition}
The index $i : \A \to \N \cup \set{0,\infty}$ has the following properties:
\begin{enumroman}
\item Definiteness: $i(A) = 0$ if and only if $A = \emptyset$.
\item Monotonicity: If there is an odd continuous map from $A$ to $B$ (in particular, if $A \subset B$), then $i(A) \le i(B)$. Thus, equality holds when the map is an odd homeomorphism.
\item Dimension: $i(A) \le \dim W$.
\item Continuity: If $A$ is closed, then there is a closed neighborhood $N \in \A$ of $A$ such that $i(N) = i(A)$. When $A$ is compact, $N$ may be chosen to be a $\delta$-neighborhood $N_\delta(A) = \set{u \in W : \dist{u}{A} \le \delta}$.
\item Subadditivity: If $A$ and $B$ are closed, then $i(A \cup B) \le i(A) + i(B)$.
\item Stability: If $SA$ is the suspension of $A \ne \emptyset$, obtained as the quotient space of $A \times [-1,1]$ with $A \times \set{1}$ and $A \times \set{-1}$ collapsed to different points, then $i(SA) = i(A) + 1$.
\item Piercing property: If $A$, $A_0$ and $A_1$ are closed, and $\varphi : A \times [0,1] \to A_0 \cup A_1$ is a continuous map such that $\varphi(-u,t) = - \varphi(u,t)$ for all $(u,t) \in A \times [0,1]$, $\varphi(A \times [0,1])$ is closed, $\varphi(A \times \set{0}) \subset A_0$ and $\varphi(A \times \set{1}) \subset A_1$, then $i(\varphi(A \times [0,1]) \cap A_0 \cap A_1) \ge i(A)$.
\item Neighborhood of zero: If $U$ is a bounded closed symmetric neighborhood of $0$, then $i(\bdry{U}) = \dim W$.
\end{enumroman}
\end{proposition}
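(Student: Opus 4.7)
The plan is to exploit the classifying map $f : \overline{A} \to \RP^\infty$ together with the cup product structure on $H^*(\RP^\infty) = \Z_2[\omega]$, relying on the good behavior of Alexander-Spanier cohomology under colimits, cup products, and long exact sequences. The proof proceeds property by property, with the first four and the last being essentially formal and the middle three requiring substantive arguments.

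For the formal properties: (i) is immediate since $f^*(\omega^0) = 1 \ne 0$ whenever $\overline{A}$ is nonempty. For (ii), an odd continuous map $g : A \to B$ descends to $\overline{g} : \overline{A} \to \overline{B}$, and $f_A \simeq f_B \circ \overline{g}$ since any two classifying maps of the same principal $\Z_2$-bundle are homotopic; hence $f_A^* = \overline{g}^* \circ f_B^*$, so nonvanishing of $f_A^*(\omega^{m-1})$ forces nonvanishing of $f_B^*(\omega^{m-1})$. Property (iii) follows from $H^q(\overline{A}) = 0$ beyond the cohomological dimension of $\overline{A}$, which is bounded by $\dim W$. Continuity (iv) relies on the tautness of Alexander-Spanier cohomology on closed subsets of metric spaces: $H^*(A) = \varinjlim H^*(N)$ over closed neighborhoods $N$, so for small enough $N$ the class $f^*(\omega^{i(A)-1})$ survives and $i(N) = i(A)$. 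For (viii), radial projection provides an odd homeomorphism $\bdry U \cong S^{\dim W - 1}$, reducing the claim by (ii) to $i(S^{n-1}) = n$, which one reads off from $H^*(\RP^{n-1}) = \Z_2[\omega]/(\omega^n)$.

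Subadditivity (v) is the first substantive step. With $m = i(A)$ and $n = i(B)$, $\omega^m$ restricts to zero on $\overline{A}$ and so lifts, via the long exact sequence of the pair $(\overline{A \cup B}, \overline{A})$, to a relative class $\tilde\alpha \in H^m(\overline{A \cup B}, \overline{A})$; analogously one obtains $\tilde\beta \in H^n(\overline{A \cup B}, \overline{B})$. Their relative cup product lies in $H^{m+n}(\overline{A \cup B}, \overline{A} \cup \overline{B})$; since the sets $A$ and $B$ are symmetric, $\overline{A} \cup \overline{B} = \overline{A \cup B}$, so this group vanishes, yet the cup product still maps to $\omega^{m+n}$ in the absolute ring. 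Hence $f^*(\omega^{m+n}) = 0$ and $i(A \cup B) \le m+n$. Stability (vi) follows from a similar exact-sequence argument: the inclusion of the equator $\overline{A} \hookrightarrow \overline{SA}$ makes $f_A$ factor through $f_{SA}$, and the cofiber sequence for the mapping cone $\overline{SA} = \overline{A} \cup C(A)$ of the double cover $A \to \overline{A}$, combined with the action of multiplication by $\omega$, yields exactly the dimension shift $i(SA) = i(A) + 1$.

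The piercing property (vii) is the main obstacle. The preimages $\varphi^{-1}(A_0)$ and $\varphi^{-1}(A_1)$ are closed symmetric subsets of $A \times [0,1]$ whose union is the whole cylinder, and they respectively contain $A \times \set{0}$ and $A \times \set{1}$. An equivariant deformation argument, combined with the already-established subadditivity, continuity, and the identity $i(A \times [0,1]) = i(A)$, forces the intersection $\varphi^{-1}(A_0) \cap \varphi^{-1}(A_1)$ to carry an index of at least $i(A)$; monotonicity under the odd map $\varphi$ then transfers the bound to $\varphi(A \times [0,1]) \cap A_0 \cap A_1$. The delicate point is preserving the $\Z_2$-equivariance through each step of the deformation while keeping track of the parameter $t$, and here I would essentially follow the original Fadell-Rabinowitz argument rather than attempt a shortcut.
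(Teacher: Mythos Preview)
The paper does not prove this proposition at all; it merely states it and cites Fadell and Rabinowitz \cite{MR0478189} for the proof. Your proposal therefore supplies an argument where the paper supplies none, so there is nothing in the paper to compare against beyond the bare citation.

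As for the content of your sketch, most of it is standard and essentially correct, but two points deserve tightening. For (iii), your appeal to ``cohomological dimension of $\overline{A}$ bounded by $\dim W$'' is loose: when $\dim W = n < \infty$, the clean argument is that $A \subset W \setminus \set{0}$ admits an odd map to $S^{n-1}$ via radial projection, whence $i(A) \le i(S^{n-1}) = n$ by (ii). For (vii), your outline is more a restatement of the difficulty than a proof. The phrase ``an equivariant deformation argument, combined with \dots'' does not actually produce the lower bound $i(\varphi^{-1}(A_0) \cap \varphi^{-1}(A_1)) \ge i(A)$; what is needed is the separation-type argument: the closed sets $C_0 = \varphi^{-1}(A_0)$ and $C_1 = \varphi^{-1}(A_1)$ cover the cylinder $A \times [0,1]$ with $A \times \set{0} \subset C_0$ and $A \times \set{1} \subset C_1$, and one then uses a relative cup-product (or Mayer--Vietoris) computation in $H^*(\overline{A \times [0,1]})$ to force the intersection $C_0 \cap C_1$ to carry the full index of $A$. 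Since you explicitly say you would ``follow the original Fadell--Rabinowitz argument,'' you are in effect citing the same source the paper does, which is acceptable but should be acknowledged as such rather than presented as an independent proof.
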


\subsection{Eigenvalues} \label{Subsection 2.3}

Eigenvalues of problem \eqref{1.4} coincide with critical values of the functional
\[
\Psi(u) = \frac{1}{\dint_\Omega |u|^N dx}, \quad u \in S = \set{u \in W^{1,N}_0(\Omega) : \int_\Omega |\nabla u|^N dx = 1}.
\]
We have the following proposition (see Perera \cite{MR1998432} and Perera et al.\! \cite[Proposition 3.52 and Proposition 3.53]{MR2640827}).

\begin{proposition} \label{Proposition 2.3}
Let $\F$ denote the class of symmetric subsets of $S$ and set
\[
\lambda_k := \inf_{\substack{M \in \F\\ i(M) \ge k}}\, \sup_{u \in M}\, \Psi(u), \quad k \in \N.
\]
Then $0 < \lambda_1 < \lambda_2 \le \lambda_3 \le \cdots \to + \infty$ is a sequence of eigenvalues of problem \eqref{1.4}. Moreover, if $\lambda_{k-1} < \lambda_k$, then
\[
i(\Psi^{\lambda_{k-1}}) = i(S \setminus \Psi_{\lambda_k}) = k - 1,
\]
where $\Psi^a = \set{u \in S : \Psi(u) \le a}$ and $\Psi_a = \set{u \in S : \Psi(u) \ge a}$ for $a \in \R$.
\end{proposition}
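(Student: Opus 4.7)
The plan is to verify the three assertions of the proposition — that each $\lambda_k$ is an eigenvalue, that $\lambda_1 < \lambda_2$, and that the index identities hold — by combining the standard Ljusternik--Schnirelmann minimax scheme with the specific properties of the cohomological index $i$ already listed in the previous subsection.

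First I would check that $\Psi$ is $C^1$ on $S$ and satisfies the Palais--Smale condition. Given $\seq{u_j} \subset S$ with $\Psi(u_j) \to c$ and tangential gradient going to zero, the Lagrange multiplier formulation yields $- \Delta_N\, u_j - \Psi(u_j)\, |u_j|^{N-2}\, u_j \to 0$ in $W^{-1,N'}(\Omega)$; extracting a weak limit and using the $(S_+)$-property of $-\Delta_N$ together with the compact embedding $W^{1,N}_0(\Omega) \hookrightarrow L^N(\Omega)$ upgrades weak convergence to norm convergence. With PS in hand, an equivariant deformation lemma adapted to the manifold $S$ applies (the descending pseudo-gradient flow can be taken odd since $\Psi$ is even and $S$ is $\Z_2$-invariant). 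If $\lambda_k$ were a regular value one could odd-deform $\Psi^{\lambda_k + \eps}$ into $\Psi^{\lambda_k - \eps}$; applying this deformation to a minimizing $M \in \F$ with $i(M) \ge k$ would produce an admissible set with sup strictly below $\lambda_k$, contradicting its definition. Hence each $\lambda_k$ is a critical value, and by Lagrange multipliers an eigenvalue. Monotonicity $\lambda_k \le \lambda_{k+1}$ is built into the definition, and $\lambda_k \to \infty$ follows from the PS compactness (otherwise infinitely many independent eigenfunctions would accumulate at a bounded eigenvalue, contradicting the $(S_+)$-compactness).

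For the strict inequality $\lambda_1 < \lambda_2$, I would invoke the classical simplicity of the first eigenvalue of $-\Delta_N$ (via Picone's identity or the Dìaz--Saá inequality): the eigenspace at $\lambda_1$ is one-dimensional and spanned by a strictly positive $\varphi_1$. Then $M = \set{\pm \varphi_1/\norm{\varphi_1}}$ realizes $\lambda_1$ with $i(M) = 1$, while any symmetric $M \in \F$ with $i(M) \ge 2$ must contain some $u$ not proportional to $\varphi_1$, on which $\Psi(u) > \lambda_1$ by the variational characterization of $\lambda_1$ as a strict minimum on that complement, whence $\sup_M \Psi > \lambda_1$ and $\lambda_2 > \lambda_1$.

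For the index identities under the assumption $\lambda_{k-1} < \lambda_k$, the upper bound $i(\Psi^{\lambda_{k-1}}) \le k-1$ is immediate, since otherwise $\Psi^{\lambda_{k-1}}$ would be admissible in the minimax defining $\lambda_k$ and force $\lambda_k \le \lambda_{k-1}$. For the lower bound I would take any admissible $M_n \in \F$ with $i(M_n) \ge k-1$ and $\sup_{M_n} \Psi \le \lambda_{k-1} + 1/n$, then apply the deformation lemma (available since the open interval $(\lambda_{k-1},\lambda_k)$ is free of critical values by the very definition of $\lambda_k$ as an infimum) to equivariantly retract $M_n$ into $\Psi^{\lambda_{k-1}}$; odd monotonicity of $i$ gives $i(\Psi^{\lambda_{k-1}}) \ge k-1$. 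The identity $i(S \setminus \Psi_{\lambda_k}) = k-1$ is dual: the inclusion $\Psi^{\lambda_{k-1}} \subset S \setminus \Psi_{\lambda_k}$ provides the lower bound by monotonicity, and for the upper bound any symmetric $A \subset S \setminus \Psi_{\lambda_k}$ with $i(A) \ge k$ would, after an equivariant deformation through the critical-value-free interval, land in $\Psi^{\lambda_{k-1}}$, again forcing $\lambda_k \le \lambda_{k-1}$.

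The main obstacle is the interface between the PS condition and an equivariant pseudo-gradient flow on the manifold $S$: one must verify that the flow stays on $S$ (so work with the tangential gradient) and that the resulting deformation is odd, and then combine the continuity, monotonicity, and subadditivity of $i$ to extract the precise index identities in the final step. Everything else is bookkeeping around the minimax definition.
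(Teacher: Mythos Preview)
The paper does not prove this proposition; it simply cites Perera \cite{MR1998432} and Perera--Agarwal--O'Regan \cite[Propositions 3.52 and 3.53]{MR2640827}. Your sketch follows the standard Ljusternik--Schnirelmann route taken in those references: verifying (PS) for $\Psi$ on $S$ via the $(S_+)$-property of $-\Delta_N$, building an odd pseudo-gradient flow, and reading off that each $\lambda_k$ is a critical value. That part is fine in outline.

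There is, however, a real gap in your index-identity argument. You claim that $(\lambda_{k-1},\lambda_k)$ contains no critical values of $\Psi$ ``by the very definition of $\lambda_k$ as an infimum,'' and then deform $M_n\subset\Psi^{\lambda_{k-1}+1/n}$ down into $\Psi^{\lambda_{k-1}}$. The definition of $\lambda_k$ says nothing of the sort: for $N\neq 2$ it is a well-known open question whether the minimax sequence $\seq{\lambda_k}$ exhausts the spectrum of $-\Delta_N$, so there may well be eigenvalues---hence critical values of $\Psi$---strictly between $\lambda_{k-1}$ and $\lambda_k$, blocking the naive deformation across the whole gap. The cited references obtain the lower bound $i(\Psi^{\lambda_{k-1}})\ge k-1$ differently, applying the quantitative equivariant deformation lemma only across the single critical level $\lambda_{k-1}$ and combining it with the continuity and subadditivity properties of $i$ near the compact critical set $K_{\lambda_{k-1}}$. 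Your upper bound $i(S\setminus\Psi_{\lambda_k})\le k-1$ is also more elaborate than needed: a compact symmetric $A\subset S\setminus\Psi_{\lambda_k}$ with $i(A)\ge k$ is already admissible in the minimax for $\lambda_k$ and yields $\lambda_k\le\sup_A\Psi<\lambda_k$ directly, no deformation required. A smaller issue: in the $\lambda_1<\lambda_2$ step, exhibiting one $u\in M$ with $\Psi(u)>\lambda_1$ for each admissible $M$ does not by itself bound $\inf_M\sup_M\Psi$ away from $\lambda_1$; you need either the multiplicity consequence $i(K_{\lambda_1})\ge 2$ (which simplicity then contradicts) or the known isolation of $\lambda_1$ in the spectrum.
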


We will also need the following result of Degiovanni and Lancelotti (\cite[Theorem 2.3]{MR2514055}).

\begin{proposition} \label{Proposition 2.4}
If $\lambda_{k-1} < \lambda_k$, then $\Psi^{\lambda_{k-1}}$ contains a compact symmetric set $C$ of index $k - 1$ that is bounded in $C^1(\closure{\Omega})$.
\end{proposition}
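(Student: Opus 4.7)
The plan is to combine elliptic regularity for eigenfunctions of the $N$-Laplacian with a finite-dimensional approximation of $\Psi|_S$ and an equivariant deformation into $\Psi^{\lambda_{k-1}}$.

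First, I would observe that every critical point of $\Psi$ on $S$ is a normalized eigenfunction of problem \eqref{1.4}. By the regularity theory of Tolksdorf, DiBenedetto, and Lieberman, together with a Moser iteration bounding $\norm[\infty]{u}$ in terms of the eigenvalue, the critical set $K$ of $\Psi|_S$ at levels $\le\lambda_{k-1}$ is compact in $W^{1,N}_0(\Omega)$ (by Palais--Smale) and bounded in $C^{1,\alpha}(\closure{\Omega})$ with a bound depending only on $\lambda_{k-1}$. In particular, if the set we wish to construct lies in a small $W^{1,N}$-neighborhood of $K$, its $C^1$-boundedness is built in.

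Next, I would choose an increasing sequence of finite-dimensional subspaces $V_n \subset C^\infty_0(\Omega)$ with $\bigcup_n V_n$ dense in $W^{1,N}_0(\Omega)$, and define restricted minimax levels
\[
\lambda_j^{(n)} = \inf_{\substack{M \subset V_n \cap S\\ i(M) \ge j}} \sup_{u \in M} \Psi(u).
\]
By approximating any symmetric set of index $\ge j$ via an odd projection onto $V_n$ (which does not decrease the index), one checks that $\lambda_j^{(n)} \downarrow \lambda_j$ for each $j$. For $n$ sufficiently large we then have $\lambda_{k-1}^{(n)} < \lambda_k$, and one may pick a compact symmetric $M_n \subset V_n \cap S$ with $i(M_n) \ge k-1$ and $\sup_{M_n} \Psi \le \lambda_{k-1}^{(n)}$. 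Since $V_n$ is finite-dimensional and consists of smooth functions, the $W^{1,N}$- and $C^1$-norms are equivalent on $V_n$, so $M_n$ is automatically $C^1$-bounded. Finally, an equivariant negative pseudo-gradient flow of $\Psi$ on $S$, constructed using tangent fields in $V_n$ away from $K$ and a standard field near $K$, pushes $M_n$ into $\Psi^{\lambda_{k-1}}$ while preserving both index and a $C^1$-bound; the resulting set $C$ satisfies $i(C) \ge k-1$, and combining $C \subset \Psi^{\lambda_{k-1}}$ with Proposition \ref{Proposition 2.3} yields $i(C) = k-1$.

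The main obstacle is controlling $C^1$-norms along the deformation. The natural pseudo-gradient field for $\Psi$ on $S$ lives in $W^{1,N}_0(\Omega)$, and its flow can easily destroy any $C^1$-estimate. To address this, one constructs the flow from finite-dimensional tangent fields on $V_n \cap S$, combined near $K$ with the $C^1$-regularity from the first step; bounded sets in $V_n$ are then $C^1$-compact by equivalence of norms. A parallel delicate point is the convergence $\lambda_j^{(n)} \downarrow \lambda_j$, which relies on an index-preserving approximation in $V_n$ and ultimately on the continuity and monotonicity properties of the $\Z_2$-cohomological index.
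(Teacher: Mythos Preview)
The paper does not supply its own proof of Proposition \ref{Proposition 2.4}; the result is quoted from Degiovanni and Lancelotti \cite[Theorem 2.3]{MR2514055}. Your Galerkin approximation and the convergence $\lambda_{k-1}^{(n)}\downarrow\lambda_{k-1}$ are in the spirit of that reference, but the deformation step, which you yourself flag as the main obstacle, remains a genuine gap.

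The difficulty is this. A vector field tangent to $V_n\cap S$ can lower $\Psi$ only down to the critical levels of the restriction $\Psi|_{V_n\cap S}$, and $\lambda_{k-1}^{(n)}$ is exactly such a level. Since in general $\lambda_{k-1}^{(n)}>\lambda_{k-1}$, any flow that stays tangent to $V_n$ will stall at the finite-dimensional critical points of $\Psi|_{V_n\cap S}$ before reaching $\Psi^{\lambda_{k-1}}$. To continue one must move in a direction outside $V_n$, at which point the equivalence-of-norms argument for $C^1$-boundedness is lost, and the $W^{1,N}_0$-valued pseudo-gradient of the full functional gives no control on $C^1$-norms along its trajectories. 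Your proposed patch, to switch to ``a standard field near $K$'' and invoke the $C^{1,\alpha}$-regularity of true eigenfunctions, does not close the gap: even granting that the restricted critical points at level $\lambda_{k-1}^{(n)}$ are $W^{1,N}_0$-close to $K$ for large $n$, proximity in $W^{1,N}_0$ to a $C^1$-bounded set does not confer a $C^1$-bound, nor does it specify a concrete field that both decreases $\Psi$ and keeps trajectories $C^1$-bounded. You have correctly located where the real work lies, but the sketch does not carry it out; in \cite{MR2514055} the set $C$ is not obtained by a $C^1$-preserving flow of this kind but by a different construction that exploits the $C^{1,\alpha}$-compactness of the genuine eigenfunction sets together with the monotonicity and continuity properties of the cohomological index.
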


\subsection{An abstract critical point theorem} \label{Subsection 2.4}

We will use the following abstract critical point theorem proved in Yang and Perera \cite[Theorem 2.2]{MR3616328} to prove Theorem \ref{Theorem 1.3}. This result generalizes the linking theorem of Rabinowitz \cite{MR0488128}.

\begin{theorem} \label{Theorem 2.2}
Let $E$ be a $C^1$-functional defined on a Banach space $W$ and let $A_0$ and $B_0$ be disjoint nonempty closed symmetric subsets of the unit sphere $S = \set{u \in W : \norm{u} = 1}$ such that
\begin{equation} \label{2.9.1}
i(A_0) = i(S \setminus B_0) < \infty.
\end{equation}
Assume that there exist $R > \rho > 0$ and $\omega \in S \setminus A_0$ such that
\[
\sup E(A) \le \inf E(B), \qquad \sup E(X) < \infty,
\]
where
\begin{gather*}
A = \set{sv : v \in A_0,\, 0 \le s \le R} \cup \set{R\, \pi((1 - t)\, v + t \omega) : v \in A_0,\, 0 \le t \le 1},\\[5pt]
B = \set{\rho u : u \in B_0},\\[5pt]
X = \set{sv + t \omega : v \in A_0,\, s, t \ge 0,\, \norm{sv + t \omega} \le R},
\end{gather*}
and $\pi : W \setminus \set{0} \to S,\, u \mapsto u/\norm{u}$ is the radial projection onto $S$. Let
\[
\Gamma = \set{\gamma \in C(X,W) : \gamma(X) \text{ is closed and} \restr{\gamma}{A} = \id{A}},
\]
and set
\[
c := \inf_{\gamma \in \Gamma}\, \sup_{u \in \gamma(X)}\, E(u).
\]
Then $\inf E(B) \le c \le \sup E(X)$, and $E$ has a {\em \PS{c}} sequence.
\end{theorem}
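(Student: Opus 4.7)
The proof splits into three steps: $(i)$ $c \le \sup E(X)$; $(ii)$ $c \ge \inf E(B)$, proved via a cohomological linking argument; and $(iii)$ existence of a $(PS)_c$ sequence, via the standard quantitative deformation lemma. Step $(i)$ is immediate: $\id{X} \in \Gamma$ since $X$ is closed and $\restr{\id{X}}{A} = \id{A}$, so $c \le \sup E(X) < \infty$.

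For the linking in $(ii)$, given $\gamma \in \Gamma$, I first extend it to an odd continuous map $\widetilde{\gamma}\colon \widetilde{X} \to W$ on $\widetilde{X} := X \cup (-X)$ by setting $\widetilde{\gamma}(u) := -\gamma(-u)$ for $u \in -X$. The overlap $X \cap (-X) = \set{sv : v \in A_0,\, 0 \le s \le R}$ lies inside $A$, where $\gamma$ is the identity (hence odd), so $\widetilde{\gamma}$ is well-defined, continuous, and equals the identity on $A \cup (-A)$. Assume for contradiction that $\gamma(X) \cap B = \emptyset$; symmetry of $B = \rho B_0$ then gives $\widetilde{\gamma}(\widetilde{X}) \cap B = \emptyset$. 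Introduce the antipodal suspension of $A_0$ with cone points $\pm \omega$,
\[
\Sigma A_0 := \set{\pi((1 - |t|)\, v + t \omega) : v \in A_0,\, t \in [-1, 1]} \subset S,
\]
where $\pi(u) = u/\norm{u}$ (note $(1-|t|)v + t\omega \ne 0$ since $\omega \notin A_0$ implies $\pm\omega \notin A_0$ by symmetry of $A_0$). A canonical odd surjection from the abstract suspension $SA_0$, whose apex points are interchanged by the $\Z_2$-action, onto $\Sigma A_0$, combined with the stability property, gives
\[
i(\Sigma A_0) \ge i(SA_0) = i(A_0) + 1 = k + 1, \qquad k := i(A_0) = i(S \setminus B_0).
\]
The rescaling $\mathcal A := R \Sigma A_0$ sits inside $A \cup (-A) \subset \widetilde{X} \setminus \set{0}$, and $\widetilde{\gamma}$ restricts to the identity on it, so $\norm{\widetilde{\gamma}(u)} = R > \rho$ on $\mathcal A$.

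Next, apply the piercing property with
\[
A_0' := \set{u \in \widetilde{X} \setminus \set{0} : \norm{\widetilde{\gamma}(u)} \le \rho}, \qquad A_1' := \set{u \in \widetilde{X} \setminus \set{0} : \norm{\widetilde{\gamma}(u)} \ge \rho},
\]
and the odd continuous map $\varphi(u, t) := ((1 - t)\, \delta + t)\, u$ on $\mathcal A \times [0, 1]$, where $\delta > 0$ is chosen small enough (using continuity of $\widetilde{\gamma}$ at $0$) that $\norm{\widetilde{\gamma}(\delta u)} < \rho$ for all $u \in \mathcal A$. Then $\varphi(u, 0) = \delta u \in A_0'$, $\varphi(u, 1) = u \in A_1'$, and the radial form of $\varphi$ together with closedness of $\mathcal A$ makes $\varphi(\mathcal A \times [0, 1])$ closed. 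Piercing yields $i(K) \ge i(\mathcal A) \ge k + 1$, where $K$ is the subset of this image on which $\norm{\widetilde{\gamma}} = \rho$. But the contradiction hypothesis makes $u \mapsto \widetilde{\gamma}(u)/\rho$ an odd continuous map $K \to S \setminus B_0$, forcing $i(K) \le k$ by monotonicity --- a contradiction. Hence $\gamma(X) \cap B \ne \emptyset$ and $c \ge \inf E(B)$. Finally, for $(iii)$, if no $(PS)_c$ sequence exists, the standard quantitative deformation lemma produces a homeomorphism $\eta$ of $W$ that pushes $\set{E \le c + \eps}$ into $\set{E \le c - \eps}$ and is the identity on $\set{E \le c - 2\eps}$; for $\eps$ small enough $\set{E \le c - 2\eps} \supset A$ (with the borderline $\sup E(A) = c$ requiring a minor perturbation), so $\eta \circ \gamma \in \Gamma$ for any nearly minimizing $\gamma \in \Gamma$, contradicting the definition of $c$.

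\textbf{Main obstacle.} The critical technical point is the bound $i(K) \ge k + 1$, which requires correctly realizing the suspension of $A_0$ inside $\widetilde{X}$ with apex points $\pm R \omega$ \emph{swapped} (rather than fixed) by the $\Z_2$-action, and invoking the stability property of the cohomological index in this form. The small-$\delta$ truncation (to keep every set in $W \setminus \set{0}$) and the closedness of the various sets are secondary technicalities, ultimately relying on the hypothesis $\omega \notin A_0$.
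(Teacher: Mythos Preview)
The paper does not give its own proof of this theorem: it is quoted verbatim from Yang and Perera \cite[Theorem~2.2]{MR3616328} and used as a black box. So there is no in-paper argument to compare against; what follows is an assessment of your sketch on its own merits. Your overall architecture---odd extension of $\gamma$, realization of a suspension of $A_0$ inside the boundary, the piercing property to hit the sphere $\norm{\cdot}=\rho$, then monotonicity against $S\setminus B_0$---is the standard cohomological-index route and is almost certainly what the cited proof does in spirit.

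There is, however, a genuine gap in step~(ii). You assert that $X\cap(-X)=\set{sv:v\in A_0,\ 0\le s\le R}$ and hence lies in $A$, so that the odd extension $\widetilde\gamma$ is well defined. This is false under the stated hypotheses. Take $W=\R^3$, $A_0=\set{\pm e_1,\pm e_2,\pm e_3}$, $\omega=(e_1+e_2)/\sqrt{2}\in S\setminus A_0$. Then $u=(e_1-e_2)/2$ satisfies
\[
u=1\cdot(-e_2)+\tfrac{1}{\sqrt2}\,\omega\in X,\qquad u=1\cdot e_1-\tfrac{1}{\sqrt2}\,\omega\in -X,
\]
yet $u/\norm{u}=(e_1-e_2)/\sqrt2\notin A_0$, so $u\notin A$ whenever $\norm{u}<R$. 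At such a point $\gamma$ is unconstrained, and nothing forces $\gamma(u)=-\gamma(-u)$; your $\widetilde\gamma$ need not be a well-defined map on $\widetilde X$. The same obstruction hits the smaller set $\varphi(\mathcal A\times[0,1])$: the direction $(e_1-e_2)/\sqrt2$ lies in $\Sigma A_0$ with one parametrization having $t>0$ (via $v=-e_2$) and another having $t<0$ (via $v=e_1$), so restricting to the cone over $\mathcal A$ does not rescue the gluing. Fixing this is not a formality; one either needs an extra independence hypothesis on $\omega$ relative to $A_0$ (not assumed here), or a construction that does not glue two copies of $\gamma$ along a set where $\gamma$ is unknown.

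Two lesser issues. In step~(i) you use $\id_X\in\Gamma$, i.e.\ that $X$ is closed; with $A_0$ merely closed (not compact) in an infinite-dimensional $W$ this is not automatic and deserves a word. In step~(iii), the borderline $\sup E(A)=\inf E(B)=c$ is a well-known nuisance: the standard deformation fixes only $\set{E\le c-2\eps}$, which need not contain $A$, so $\eta\circ\gamma$ may leave $\Gamma$. ``Minor perturbation'' is too vague; the usual remedies are either to argue separately that in this degenerate case a \PS{c} sequence already sits near $B$, or to build a deformation that fixes $A$ itself.
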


\subsection{Moser sequence}

For $j \ge 2$, let
\begin{equation} \label{2.5.1}
\omega_j(x) = \frac{1}{\omega_{N-1}^{1/N}}\, \begin{cases}
(\log j)^{(N-1)/N}, & |x| \le d/j\\[15pt]
\dfrac{\log\, (d/|x|)}{(\log j)^{1/N}}, & d/j < |x| < d\\[15pt]
0, & |x| \ge d
\end{cases}
\end{equation}
(see Moser \cite{MR0301504}).

\begin{proposition} \label{Proposition 2.6}
We have
\begin{equation} \label{3.4.1}
\int_\Omega \omega_j^m\, dx = \frac{m!\, \omega_{N-1}^{1-m/N} d^N}{N^{m+1}\, (\log j)^{m/N}} \left[1 - \frac{1}{j^N} \sum_{l=1}^m \frac{(N \log j)^{m-l}}{(m - l)!}\right], \quad m = 1,\dots,N
\end{equation}
and
\begin{equation} \label{3.4.2}
\int_\Omega |\nabla \omega_j|^m\, dx = \begin{cases}
\dfrac{\omega_{N-1}^{1-m/N} d^{N-m}}{(N - m)\, (\log j)^{m/N}} \left(1 - \dfrac{1}{j^{N-m}}\right), & m = 1,\dots,N - 1\\[15pt]
1, & m = N.
\end{cases}
\end{equation}
\end{proposition}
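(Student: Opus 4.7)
I would exploit the radial, piecewise structure of $\omega_j$, splitting the integration domain into the plateau $P_j = \set{|x| \le d/j}$, on which $\omega_j$ takes the constant value $c_j := (\log j)^{(N-1)/N}/\omega_{N-1}^{1/N}$ and $\nabla \omega_j = 0$, and the logarithmic annulus $A_j = \set{d/j < |x| < d}$, on which $\omega_j(x) = \log(d/|x|)/(\omega_{N-1}^{1/N} (\log j)^{1/N})$ and $|\nabla \omega_j(x)| = 1/(\omega_{N-1}^{1/N} (\log j)^{1/N} |x|)$.

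For \eqref{3.4.1}, the plateau contributes $c_j^m \cdot \omega_{N-1} d^N/(N j^N) = \omega_{N-1}^{1-m/N} (\log j)^{m(N-1)/N} d^N/(N j^N)$. Passing to polar coordinates on $A_j$ and substituting $s = \log(d/r)$ (so $r^{N-1}\, dr = -d^N e^{-Ns}\, ds$) reduces the annular contribution to
\[
\frac{\omega_{N-1}^{1-m/N}\, d^N}{(\log j)^{m/N}} \int_0^{\log j} s^m e^{-Ns}\, ds.
\]
Scaling $u = Ns$ turns this into a lower incomplete gamma integral, and the standard identity
\[
\int_0^T u^m e^{-u}\, du = m!\left[1 - e^{-T} \sum_{k=0}^m \frac{T^k}{k!}\right]
\]
applied with $T = N \log j$ (so $e^{-T} = j^{-N}$) yields a partial sum over $k = 0, 1, \ldots, m$. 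The key observation is that the $k = m$ term reduces, using $m - m/N = m(N-1)/N$, to exactly $-\omega_{N-1}^{1-m/N} (\log j)^{m(N-1)/N} d^N/(N j^N)$, cancelling the plateau contribution. Reindexing $l = m - k$ so the surviving sum runs over $l = 1, \ldots, m$ then gives \eqref{3.4.1}.

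For \eqref{3.4.2}, only $A_j$ contributes since $\nabla \omega_j = 0$ on $P_j$, and polar coordinates give
\[
\int_\Omega |\nabla \omega_j|^m\, dx = \frac{\omega_{N-1}^{1-m/N}}{(\log j)^{m/N}} \int_{d/j}^d r^{N-1-m}\, dr.
\]
For $1 \le m \le N - 1$ the radial integral evaluates to $(d^{N-m}/(N - m))(1 - j^{-(N-m)})$, producing the first case; for $m = N$ the integrand is $1/r$, the radial integral equals $\log j$, and together with the prefactor $(\log j)^{-1}$ this gives the value $1$.

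The only step requiring genuine care rather than mechanical calculation is spotting the cancellation of the plateau contribution against the $k = m$ term in the gamma-function expansion; everything else is routine single-variable calculus and bookkeeping of constants.
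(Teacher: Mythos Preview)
Your proof is correct and follows essentially the same route as the paper: split into the plateau and the annulus, reduce the annular integral to an incomplete-gamma integral, and observe that the plateau term cancels the top term of the resulting partial sum (the paper's $l=0$ term, your $k=m$ term). The only cosmetic difference is that the paper derives the identity for $\int_0^T u^m e^{-u}\,du$ by computing the base case and establishing a recurrence via integration by parts, whereas you invoke the formula directly.
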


\begin{proof}
We have
\[
\int_\Omega \omega_j^m\, dx = \frac{\omega_{N-1}^{1-m/N} d^N}{(\log j)^{m/N}} \left[I_m + \frac{(\log j)^m}{N j^N}\right],
\]
where
\[
I_m = \int_{1/j}^1 (- \log s)^m\, s^{N-1}\, ds.
\]
We have
\[
I_1 = \frac{1}{N^2} \left[1 - \frac{1}{j^N}\, (N \log j + 1)\right],
\]
and integrating by parts gives the recurrence relation
\[
I_m = \frac{m}{N}\, I_{m-1} - \frac{(\log j)^m}{N j^N}, \quad m \ge 2.
\]
So
\[
I_m = \frac{m!}{N^{m+1}} \left[1 - \frac{1}{j^N} \sum_{l=0}^m \frac{(N \log j)^{m-l}}{(m - l)!}\right],
\]
and \eqref{3.4.1} follows. The integral in \eqref{3.4.2} is easily evaluated.
\end{proof}

\subsection{A limit calculation}

We will need the following limit in the proof of Theorem \ref{Theorem 1.1}.

\begin{proposition} \label{Proposition 2.7}
We have
\[
\lim_{n \to \infty}\, \int_0^1 ne^{- n\, (t - t^{N'})}\, dt = N.
\]
\end{proposition}

\begin{proof}
Let $f_n(t) = ne^{- n\, (t - t^{N'})}$ and set $t_0 = (N')^{-1/(N'-1)}$. For $t \ne t_0$,
\begin{equation} \label{2.13}
f_n(t) = g_n(t) - \frac{d}{dt} \left(\frac{e^{- n\, (t - t^{N'})}}{1 - N'\, t^{N'-1}}\right),
\end{equation}
where
\[
g_n(t) = \frac{N' (N' - 1)\, t^{N'-2}\, e^{- n\, (t - t^{N'})}}{(1 - N'\, t^{N'-1})^2}.
\]
Fix $\delta$ so small that $0 < \delta < t_0 < 1 - \delta < 1$ and write
\begin{equation} \label{2.14}
\int_0^1 f_n(t)\, dt = \int_0^\delta f_n(t)\, dt + \int_\delta^{1 - \delta} f_n(t)\, dt + \int_{1 - \delta}^1 f_n(t)\, dt.
\end{equation}
By \eqref{2.13},
\begin{equation} \label{2.15}
\int_0^\delta f_n(t)\, dt = \int_0^\delta g_n(t)\, dt - \frac{e^{- n\, (\delta - \delta^{N'})}}{1 - N'\, \delta^{N'-1}} + 1.
\end{equation}
For all $t \in (0,\delta)$, $g_n(t) \to 0$ as $n \to \infty$ and $\abs{g_n(t)} \le N' (N' - 1)\, t^{N'-2}/(1 - N'\, \delta^{N'-1})^2$, so $\int_0^\delta g_n(t)\, dt \to 0$ by the dominated convergence theorem. So $\int_0^1 f_n(t)\, dt \to 1$ by \eqref{2.15}. A similar calculation shows that $\int_{1 - \delta}^1 f_n(t)\, dt \to N - 1$. On the other hand, it is easily seen that $\int_\delta^{1 - \delta} f_n(t)\, dt \to 0$. So $\int_0^1 f_n(t)\, dt \to N$ by \eqref{2.14}.
\end{proof}

\section{Proof of Theorem \ref{Theorem 1.1}}

In this section we prove Theorem \ref{Theorem 1.1} by showing that the functional $E$ has the mountain pass geometry with the mountain pass level $c \in (0,(1/N)(\alpha_N/\alpha)^{N-1})$ and applying Proposition \ref{Proposition 2.1}.

\begin{lemma} \label{Lemma 3.1}
There exists a $\rho > 0$ such that
\[
\inf_{\norm{u} = \rho}\, E(u) > 0.
\]
\end{lemma}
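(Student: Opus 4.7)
The plan is to bound $E(u)$ from below on the sphere $\|u\|=\rho$ by a quantity of the form $C_1 \rho^N - C_2 \rho^q$ with $q > N$, so that positivity holds once $\rho$ is chosen small enough. The starting point is to combine the subcritical estimate \eqref{1.7} with the decay \eqref{1.1.1} of $h$ at infinity to produce a single global upper bound on $G$.

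First I would fix some $q > N$ and show that there exists $C > 0$ such that
\[
G(t) \le \frac{1}{N}\,(\lambda_1 - \sigma_1)\,|t|^N + C\,|t|^q\, e^{\alpha\,|t|^{N'}} \quad \forall\, t \in \R.
\]
For $|t| \le \delta$ this is immediate from \eqref{1.7}. For $|t| > \delta$, the hypothesis \eqref{1.1.1} together with continuity gives $|h|\le M$ on $\R$, so $|G(t)| \le M \int_0^{|t|} e^{\alpha s^{N'}} ds \le C_\delta |t|\, e^{\alpha|t|^{N'}}$, and the factor $(|t|/\delta)^{q-1}\ge 1$ absorbs everything into $C |t|^q e^{\alpha|t|^{N'}}$.

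Next I would substitute this bound into $E$ and use the Poincar\'e-type estimate $\int_\Omega |u|^N dx \le \lambda_1^{-1} \|u\|^N$ coming from \eqref{1.5}, obtaining
\[
E(u) \ge \frac{\sigma_1}{N\, \lambda_1}\,\norm{u}^N - C \int_\Omega |u|^q\, e^{\alpha\,|u|^{N'}} dx.
\]
To control the exponential integral on a small sphere $\|u\|=\rho$, I would apply H\"older's inequality with exponents $r, r' > 1$:
\[
\int_\Omega |u|^q\, e^{\alpha\,|u|^{N'}} dx \le \left(\int_\Omega |u|^{q r'} dx\right)^{1/r'}\! \left(\int_\Omega e^{r \alpha\, |u|^{N'}} dx\right)^{1/r}.
\]
Writing $r\alpha\,|u|^{N'} = (r \alpha \|u\|^{N'}) \,|u/\|u\||^{N'}$ and choosing $\rho$ so small that $r\alpha\,\rho^{N'} \le \alpha_N$ forces the second factor to be bounded by a constant via the Trudinger–Moser inequality \eqref{1.3}, while the first factor is controlled by $\|u\|^q$ through the Sobolev embedding $W^{1,N}_0(\Omega) \hookrightarrow L^{qr'}(\Omega)$.

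Combining, $E(u) \ge C_1 \rho^N - C_2 \rho^q$ for all $u$ with $\|u\|=\rho$, and since $q > N$ this is strictly positive for $\rho$ sufficiently small. The only mildly delicate step is the initial hybrid bound on $G$: one needs $h$ to decay at infinity (not just boundedness) only if one wants the small-$|t|$ coefficient of $|t|^N$ to stay sharp at $\lambda_1-\sigma_1$, and here \eqref{1.7} already hands that to us, so no additional subtlety is required from \eqref{1.2}. The rest is a standard Trudinger–Moser/H\"older computation.
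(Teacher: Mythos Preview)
Your argument is correct and follows essentially the same route as the paper: the paper takes $q=N+1$ and H\"older exponent $r=2$, but otherwise obtains the same hybrid bound $G(t)\le \tfrac{1}{N}(\lambda_1-\sigma_1)|t|^N + C_\delta |t|^{N+1} e^{\alpha|t|^{N'}}$ from \eqref{1.7} and boundedness of $h$, then combines \eqref{1.5}, Sobolev, and Trudinger--Moser exactly as you describe to reach $E(u)\ge \tfrac{\sigma_1}{N\lambda_1}\rho^N + \O(\rho^{N+1})$.
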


\begin{proof}
Since \eqref{1.1.1} implies that $h$ is bounded, there exists a constant $C_\delta > 0$ such that
\[
|G(t)| \le C_\delta\, |t|^{N+1}\, e^{\alpha\, |t|^{N'}} \quad \text{for } |t| > \delta,
\]
which together with \eqref{1.7} gives
\begin{equation} \label{3.1}
\int_\Omega G(u)\, dx \le \frac{1}{N}\, (\lambda_1 - \sigma_1) \int_\Omega |u|^N dx + C_\delta \int_\Omega |u|^{N+1}\, e^{\alpha\, |u|^{N'}} dx.
\end{equation}
By \eqref{1.5},
\begin{equation}
\int_\Omega |u|^N dx \le \frac{\rho^N}{\lambda_1},
\end{equation}
where $\rho = \norm{u}$. By the H\"{o}lder inequality,
\begin{equation} \label{3.3}
\int_\Omega |u|^{N+1}\, e^{\alpha\, |u|^{N'}} dx \le \left(\int_\Omega |u|^{2\, (N+1)}\, dx\right)^{1/2} \left(\int_\Omega e^{2 \alpha\, |u|^{N'}} dx\right)^{1/2}.
\end{equation}
The first integral on the right-hand side is bounded by $C \rho^{2\, (N+1)}$ for some constant $C > 0$ by the Sobolev embedding theorem. Since $2 \alpha\, |u|^{N'} = 2 \alpha\, \rho^{N'} |\widetilde{u}|^{N'}$, where $\widetilde{u} = u/\rho$ satisfies $\norm{\widetilde{u}} = 1$, the second integral is bounded when $\rho^{N'} \le \alpha_N/2 \alpha$ by \eqref{1.3}. So combining \eqref{3.1}--\eqref{3.3} gives
\[
\int_\Omega G(u)\, dx \le \frac{1}{N} \left(1 - \frac{\sigma_1}{\lambda_1}\right) \rho^N + \O(\rho^{N+1}) \quad \text{as } \rho \to 0.
\]
Then
\[
E(u) \ge \frac{1}{N}\, \frac{\sigma_1}{\lambda_1}\, \rho^N + \O(\rho^{N+1}),
\]
and the desired conclusion follows from this for sufficiently small $\rho > 0$.
\end{proof}

We may assume without loss of generality that $B_d(0) \subset \Omega$. Let $\seq{\omega_j}$ be the sequence of functions defined in \eqref{2.5.1}.

\begin{lemma} \label{Lemma 3.2}
We have
\begin{enumroman}
\item \label{Lemma 3.2 (i)} $E(t \omega_j) \to - \infty$ as $t \to \infty$ for all $j \ge 2$,
\item \label{Lemma 3.2 (ii)} $\exists j_0 \ge 2$ such that
    \[
    \sup_{t \ge 0}\, E(t \omega_{j_0}) < \frac{1}{N} \left(\frac{\alpha_N}{\alpha}\right)^{N-1}.
    \]
\end{enumroman}
\end{lemma}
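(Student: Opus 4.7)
Both parts use $\|\omega_j\| = 1$ and $\omega_j \ge 0$, so
\[
E(t\omega_j) = \frac{t^N}{N} - \int_\Omega G(t\omega_j)\,dx.
\]
For part \ref{Lemma 3.2 (i)} I would combine the global bound \eqref{1.9} with the asymptotic consequence of \eqref{1.2} that $G(s) \gtrsim e^{\alpha s^{N'}}/s^{N'}$ as $s \to \infty$. On the concentration ball $|x| \le d/j$, where $\omega_j$ equals the positive constant $c_j := (\log j)^{(N-1)/N}/\omega_{N-1}^{1/N}$ satisfying $\alpha(tc_j)^{N'} = N\alpha t^{N'}\log j/\alpha_N$, the inner contribution $\int_{|x| \le d/j} G(t\omega_j)\,dx$ grows like $j^{N\alpha t^{N'}/\alpha_N - N}$ as $t \to \infty$, which dominates $t^N$ for each fixed $j$ and forces $E(t\omega_j) \to -\infty$.

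For part \ref{Lemma 3.2 (ii)} I would argue by contradiction: assume $\bar c_j := \sup_{t \ge 0} E(t\omega_j) \ge \frac{1}{N}(\alpha_N/\alpha)^{N-1}$ for every $j \ge 2$, and let $t_j > 0$ attain the sup (it exists by (i)). Using \eqref{1.9} and Proposition \ref{Proposition 2.6} with $m = N$ (which yields $\int_\Omega \omega_j^N\,dx \sim 1/(N\kappa \log j)$), the assumption gives $t_j^N(1 + \sigma_0 \int_\Omega \omega_j^N) \ge (\alpha_N/\alpha)^{N-1}$. Setting $\nu_j := N\alpha t_j^{N'}/\alpha_N$ and $x_j := (\nu_j - N)\log j$, a Taylor expansion then produces the crucial lower bound
\[
\liminf_j x_j \ge -\frac{\sigma_0}{(N-1)\kappa},
\]
which accounts for the factor $e^{\sigma_0/((N-1)\kappa)}$ in \eqref{1.10}. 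The stationarity identity $t_j^{N-1} = \int_\Omega \omega_j h(t_j\omega_j) e^{\alpha(t_j\omega_j)^{N'}}\,dx$ combined with \eqref{1.2} gives
\[
t_j^N \ge (\beta - \eps)\int_\Omega e^{\alpha(t_j\omega_j)^{N'}}\,dx - O(1),
\]
which forces $t_j$ to be bounded (otherwise the inner-ball contribution $(\omega_{N-1}d^N/N) e^{x_j}$ explodes), hence $t_j^N \to (\alpha_N/\alpha)^{N-1}$ and $\nu_j \to N$, with $x_j$ admitting a subsequential limit $x \in [-\sigma_0/((N-1)\kappa),\infty)$.

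The decisive calculation is the asymptotic of $\int_\Omega e^{\alpha(t_j\omega_j)^{N'}}\,dx$. Substituting $\sigma = \log(d/|x|)/\log j$ on the annulus $d/j < |x| < d$ and setting $n = N\log j$ converts the outer part to $(\omega_{N-1}d^N/N) \int_0^1 n e^{-n(\sigma - \sigma^{N'})} e^{x_j \sigma^{N'}}\,d\sigma$. Since the mass of $n e^{-n(\sigma - \sigma^{N'})}$ concentrates at $\sigma = 0$ and $\sigma = 1$, where $e^{x_j \sigma^{N'}}$ tends to $1$ and to $e^x$ while the two boundary contributions to $\M$ are $1$ and $N-1$ respectively, the outer limit is $(\omega_{N-1}d^N/N)[1 + (N-1)e^x]$; the inner ball contributes $(\omega_{N-1}d^N/N)e^x$. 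Substituting back, using $\omega_{N-1} = (\alpha_N/N)^{N-1}$, and letting $\eps \to 0$ produces $\beta \le (N/d)^N/[\alpha^{N-1}(1 + Ne^x)]$. Since $x \ge -\sigma_0/((N-1)\kappa)$, one verifies
\[
(1 + Ne^x)e^{\sigma_0/((N-1)\kappa)} \ge \bigl(1 + Ne^{-\sigma_0/((N-1)\kappa)}\bigr)e^{\sigma_0/((N-1)\kappa)} > N = \M,
\]
contradicting \eqref{1.10}.

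The main obstacle will be the uniform-in-$\nu_j$ Laplace-type analysis of the integral $\int_0^1 n e^{-n(\sigma - \sigma^{N'})} e^{x_j \sigma^{N'}}\,d\sigma$ and its clean identification with $\M$, together with careful bookkeeping that the subregion of $\Omega$ where $t_j\omega_j$ stays bounded (so that $sh(s) \ge \beta - \eps$ fails) contributes only $O(1)$ to the stationarity identity.
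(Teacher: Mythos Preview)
Your overall strategy for (ii) matches the paper's: argue by contradiction, locate the maximizer $t_j$, combine \eqref{1.9} with Proposition \ref{Proposition 2.6} to pin down $(t_j/t_0)^{N'}$ to order $1/\log j$, and feed the stationarity identity into a Laplace-type analysis of $\int e^{\alpha(t_j\omega_j)^{N'}}$ on the Moser annulus. Your computation $\M=N$ via the two endpoint contributions ($1$ at $\sigma=0$ and $N-1$ at $\sigma=1$) is correct and is a nice addition. Part (i) is fine as well; the paper uses a cruder polynomial lower bound $G(t)\gtrsim t^{2N}$ instead of your direct exponential estimate on the inner ball.

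The gap is in the passage from the stationarity identity to the inequality $\beta \le (N/d)^N/[\alpha^{N-1}(1+Ne^x)]$. The extra ``$1$'' in $1+Ne^x$ comes entirely from the endpoint $\sigma=0$ of your annulus integral, and that is precisely the region where $t_j\omega_j\le M_\eps$ and the bound $sh(s)\ge\beta-\eps$ is unavailable. So this piece cannot legitimately appear on the right of the stationarity inequality with coefficient $(\beta-\eps)$; your ``$-O(1)$'' correction has to swallow it, and since that $O(1)$ depends on $M_\eps$ (hence on $\eps$) and does \emph{not} tend to $0$ as $j\to\infty$, the clean limit you claim does not follow. The paper avoids this in two ways: it uses the sharper pointwise bound $th(t)e^{\alpha|t|^{N'}}\ge -C_\eps|t|$ so that the bad-region contribution to $\int t_j\omega_j h(t_j\omega_j)e^{\cdots}$ is $\le C_\eps t_j\int_\Omega\omega_j=o(1)$ (via Proposition \ref{Proposition 2.6}), and it applies the $(\beta-\eps)$-lower bound only on $\{t_j\omega_j>M_\eps\}$, explicitly subtracting the near-$\sigma=0$ piece (the ``$-1$'' in $L_1+L_2-1$). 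In your notation this leaves only $Ne^x$ instead of $1+Ne^x$, yielding $\beta\le(N/d)^N/(\alpha^{N-1}\M e^x)$ with $x\ge -\sigma_0/((N-1)\kappa)$, which still contradicts \eqref{1.10}.
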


\begin{proof}
\ref{Lemma 3.2 (i)} Fix $0 < \eps < \beta$. By \eqref{1.2}, $\exists M_\eps > 0$ such that
\begin{equation} \label{3.6}
th(t)\, e^{\alpha\, |t|^{N'}} > (\beta - \eps)\, e^{\alpha\, |t|^{N'}} \quad \text{for } |t| > M_\eps.
\end{equation}
Since $e^{\alpha\, |t|^{N'}} > \alpha^{2N-2}\, t^{2N}/(2N - 2)!$ for all $t$, then there exists a constant $C_\eps > 0$ such that
\begin{equation} \label{3.7}
th(t)\, e^{\alpha\, |t|^{N'}} \ge \frac{1}{(2N - 2)!}\, (\beta - \eps)\, \alpha^{2N-2}\, t^{2N} - C_\eps\, |t|
\end{equation}
and
\begin{equation} \label{3.8}
G(t) \ge \frac{2N - 1}{(2N)!}\, (\beta - \eps)\, \alpha^{2N-2}\, t^{2N} - C_\eps\, |t|
\end{equation}
for all $t$. Since $\norm{\omega_j} = 1$ and $\omega_j \ge 0$, then
\[
E(t \omega_j) \le \frac{t^N}{N} - \frac{2N - 1}{(2N)!}\, (\beta - \eps)\, \alpha^{2N-2}\, t^{2N} \int_\Omega \omega_j^{2N}\, dx + C_\eps\, t \int_\Omega \omega_j\, dx,
\]
and the conclusion follows.

\ref{Lemma 3.2 (ii)} Set
\[
H_j(t) = E(t \omega_j) = \frac{t^N}{N} - \int_\Omega G(t \omega_j)\, dx, \quad t \ge 0.
\]
If the conclusion is false, then it follows from \ref{Lemma 3.2 (i)} that for all $j \ge 2$, $\exists t_j > 0$ such that
\begin{gather}
\label{3.9} H_j(t_j) = \frac{t_j^N}{N} - \int_\Omega G(t_j \omega_j)\, dx = \sup_{t \ge 0}\, H_j(t) \ge \frac{1}{N} \left(\frac{\alpha_N}{\alpha}\right)^{N-1},\\[7.5pt]
\label{3.10} H_j'(t_j) = t_j^{N-1} - \int_\Omega \omega_j\, h(t_j \omega_j)\, e^{\alpha\, t_j^{N'} \omega_j^{N'}} dx = 0.
\end{gather}
Since $G(t) \ge - C_\eps\, t$ for all $t \ge 0$ by \eqref{3.8}, \eqref{3.9} gives
\begin{equation} \label{3.11}
t_j^N \ge t_0^N - N \delta_j\, t_j,
\end{equation}
where
\[
t_0 = \left(\frac{\alpha_N}{\alpha}\right)^{(N-1)/N}
\]
and
\begin{equation} \label{3.110}
\delta_j = C_\eps \int_\Omega \omega_j\, dx \to 0 \quad \text{as } j \to \infty
\end{equation}
by Proposition \ref{Proposition 2.6}. First we will show that $t_j \to t_0$.

By \eqref{3.11} and the Young's inequality,
\[
(1 + \nu)\, t_j^N \ge t_0^N - \frac{N - 1}{\nu^{1/(N-1)}}\, \delta_j^{N'} \quad \forall \nu > 0,
\]
which together with \eqref{3.110} gives
\begin{equation} \label{3.13}
\liminf_{j \to \infty}\, t_j \ge t_0.
\end{equation}
Write \eqref{3.10} as
\begin{equation} \label{3.14}
t_j^N = \int_{\set{t_j \omega_j > M_\eps}} t_j \omega_j\, h(t_j \omega_j)\, e^{\alpha\, t_j^{N'} \omega_j^{N'}} dx + \int_{\set{t_j \omega_j \le M_\eps}} t_j \omega_j\, h(t_j \omega_j)\, e^{\alpha\, t_j^{N'} \omega_j^{N'}} dx =: I_1 + I_2.
\end{equation}
Set $r_j = de^{- M_\eps\, (\omega_{N-1} \log j)^{1/N}/t_j}$. Since $\liminf t_j > 0$, for all sufficiently large $j$, $d/j < r_j < d$ and $t_j \omega_j(x) > M_\eps$ if and only if $|x| < r_j$. So \eqref{3.6} gives
\begin{multline}
I_1 \ge (\beta - \eps) \int_{\set{|x| < r_j}} e^{\alpha\, t_j^{N'} \omega_j^{N'}} dx = (\beta - \eps) \Bigg(\int_{\set{|x| \le d/j}} e^{\alpha\, t_j^{N'} \omega_j^{N'}} dx\\[7.5pt]
+ \int_{\set{d/j < |x| < r_j}} e^{\alpha\, t_j^{N'} \omega_j^{N'}} dx\Bigg) =: (\beta - \eps)\, (I_3 + I_4).
\end{multline}
We have
\begin{equation}
I_3 = \frac{\omega_{N-1}}{N} \left(\frac{d}{j}\right)^N e^{\alpha\, t_j^{N'} \log j/\omega_{N-1}^{1/(N-1)}} = \frac{\omega_{N-1}}{N}\, d^N j^{\alpha\, (t_j^{N'} - t_0^{N'})/\omega_{N-1}^{1/(N-1)}}.
\end{equation}
Since $th(t)\, e^{\alpha\, |t|^{N'}} \ge - C_\eps\, t$ for all $t \ge 0$ by \eqref{3.7},
\begin{equation} \label{3.17}
I_2 \ge - C_\eps\, t_j \int_{\set{t_j \omega_j \le M_\eps}} \omega_j\, dx \ge - \delta_j\, t_j.
\end{equation}
Combining \eqref{3.14}--\eqref{3.17} and noting that $I_4 \ge 0$ gives
\[
t_j^N \ge (\beta - \eps)\, \frac{\omega_{N-1}}{N}\, d^N j^{\alpha\, (t_j^{N'} - t_0^{N'})/\omega_{N-1}^{1/(N-1)}} - \delta_j\, t_j.
\]
It follows from this that
\[
\limsup_{j \to \infty}\, t_j \le t_0,
\]
which together with \eqref{3.13} shows that $t_j \to t_0$.

Next we estimate $I_4$. We have
\begin{align}
I_4 & = \int_{\set{d/j < |x| < r_j}} e^{\alpha\, t_j^{N'} [\log\, (d/|x|)]^{N'}/(\omega_{N-1} \log j)^{1/(N-1)}}\, dx \notag\\[7.5pt]
& = \omega_{N-1}\, \Bigg(\int_{d/j}^d e^{\alpha\, t_j^{N'} [\log\, (d/r)]^{N'}/(\omega_{N-1} \log j)^{1/(N-1)}}\, r^{N-1}\, dr \notag\\[7.5pt]
& \phantom{=} - \int_{r_j}^d e^{\alpha\, t_j^{N'} [\log\, (d/r)]^{N'}/(\omega_{N-1} \log j)^{1/(N-1)}}\, r^{N-1}\, dr\Bigg) \notag\\[7.5pt]
& = \omega_{N-1}\, d^N \Bigg(\log j \int_0^1 e^{- Nt\, [1 - (t_j/t_0)^{N'} t^{1/(N-1)}] \log j}\, dt \notag\\[7.5pt]
& \phantom{=} - \int_{s_j}^1 s^{N-1}\, e^{\alpha\, t_j^{N'} (- \log s)^{N'}/(\omega_{N-1} \log j)^{1/(N-1)}}\, ds\Bigg), \label{3.19}
\end{align}
where $t = \log\, (d/r)/\log j$, $s = r/d$, and $s_j = r_j/d = e^{- M_\eps\, (\omega_{N-1} \log j)^{1/N}/t_j} \to 0$. For $s_j < s < 1$, $\alpha\, t_j^{N'} (- \log s)^{N'}/(\omega_{N-1} \log j)^{1/(N-1)}$ is bounded by $\alpha M_\eps^{N'}$ and goes to zero as $j \to \infty$, so the last integral converges to
\[
\int_0^1 s^{N-1}\, ds = \frac{1}{N}.
\]
So combining \eqref{3.14}--\eqref{3.19} and letting $j \to \infty$ gives
\[
t_0^N \ge (\beta - \eps)\, \frac{\omega_{N-1}}{N}\, d^N (L_1 + L_2 - 1),
\]
where
\begin{gather*}
L_1 = \liminf_{j \to \infty}\, e^{- n\, [1 - (t_j/t_0)^{N'}]},\\[7.5pt]
L_2 = \liminf_{j \to \infty}\, \int_0^1 ne^{- n\, [t - (t_j/t_0)^{N'} t^{N'}]}\, dt,
\end{gather*}
and $n = N \log j \to \infty$. Letting $\eps \to 0$ in this inequality gives
\begin{equation} \label{3.20}
\beta \le \frac{1}{\alpha^{N-1}} \left(\frac{N}{d}\right)^N \frac{1}{L_1 + L_2 - 1}.
\end{equation}

By \eqref{3.9}, \eqref{1.9}, and Proposition \ref{Proposition 2.6},
\[
t_j^N - t_0^N \ge N \int_\Omega G(t_j \omega_j)\, dx \ge - \sigma_0\, t_j^N \int_\Omega \omega_j^N\, dx \ge - \frac{\sigma_0\, t_j^N}{\kappa n},
\]
so
\[
\left(\frac{t_j}{t_0}\right)^{N'} \ge \left(1 + \frac{\sigma_0}{\kappa n}\right)^{- 1/(N-1)} \ge 1 - \frac{\sigma_0}{(N - 1)\, \kappa n}.
\]
This gives
\[
L_1 \ge e^{- \sigma_0/(N-1)\, \kappa}
\]
and
\[
L_2 \ge \lim_{n \to \infty}\, \int_0^1 ne^{- n\, (t - t^{N'}) - \sigma_0\, t^{N'}/(N-1)\, \kappa}\, dt \ge Ne^{- \sigma_0/(N-1)\, \kappa}
\]
by Proposition \ref{Proposition 2.7}. So \eqref{3.20} gives
\[
\beta \le \frac{1}{\alpha^{N-1}} \left(\frac{N}{d}\right)^N \frac{1}{Ne^{- \sigma_0/(N-1)\, \kappa} - (1 - e^{- \sigma_0/(N-1)\, \kappa})} \le \frac{1}{N \alpha^{N-1}} \left(\frac{N}{d}\right)^N\! e^{\sigma_0/(N-1)\, \kappa},
\]
contradicting \eqref{1.10}.
\end{proof}

We are now ready to prove Theorem \ref{Theorem 1.1}.

\begin{proof}[Proof of Theorem \ref{Theorem 1.1}]
Let $j_0$ be as in Lemma \ref{Lemma 3.2} \ref{Lemma 3.2 (ii)}. By Lemma \ref{Lemma 3.2} \ref{Lemma 3.2 (i)}, $\exists R > \rho$ such that $E(R \omega_{j_0}) \le 0$, where $\rho$ is as in Lemma \ref{Lemma 3.1}. Let
\[
\Gamma = \set{\gamma \in C([0,1],W^{1,N}_0(\Omega)) : \gamma(0) = 0,\, \gamma(1) = R \omega_{j_0}}
\]
be the class of paths joining the origin to $R \omega_{j_0}$, and set
\[
c := \inf_{\gamma \in \Gamma}\, \max_{u \in \gamma([0,1])}\, E(u).
\]
By Lemma \ref{Lemma 3.1}, $c > 0$. Since the path $\gamma_0(t) = tR \omega_{j_0},\, t \in [0,1]$ is in $\Gamma$,
\[
c \le \max_{u \in \gamma_0([0,1])}\, E(u) \le \sup_{t \ge 0}\, E(t \omega_{j_0}) < \frac{1}{N} \left(\frac{\alpha_N}{\alpha}\right)^{N-1}.
\]
If there are no \PS{c} sequences of $E$, then $E$ satisfies the \PS{c} condition vacuously and hence has a critical point $u$ at the level $c$ by the mountain pass theorem. Then $u$ is a solution of problem \eqref{1.1} and $u$ is nontrivial since $c > 0$. So we may assume that $E$ has a \PS{c} sequence. Then this sequence has a subsequence that converges weakly to a nontrivial solution of problem \eqref{1.1} by Proposition \ref{Proposition 2.1}.
\end{proof}

\section{Proof of Theorem \ref{Theorem 1.3}}

In this section we prove Theorem \ref{Theorem 1.3} using Theorem \ref{Theorem 2.2}. We take $A_0$ to be the set $C$ in Proposition \ref{Proposition 2.4} and $B_0 = \Psi_{\lambda_k}$. Since $i(S \setminus B_0) = k - 1$ by Proposition \ref{Proposition 2.3}, \eqref{2.9.1} holds.

\begin{lemma} \label{Lemma 4.1}
There exists a $\rho > 0$ such that $\inf E(B) > 0$, where $B = \set{\rho u : u \in B_0}$.
\end{lemma}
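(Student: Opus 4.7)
The plan is to mirror the argument of Lemma \ref{Lemma 3.1}, with $\lambda_1$ replaced by $\lambda_k$, exploiting the fact that every $u \in B_0$ satisfies $\norm{u} = 1$ and, by the definition $B_0 = \Psi_{\lambda_k}$, the reverse Poincar\'e-type bound
\[
\int_\Omega |u|^N\, dx = \frac{1}{\Psi(u)} \le \frac{1}{\lambda_k}.
\]
This is the replacement of \eqref{1.5} that will be used to absorb the leading-order term coming from $G$.

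First I would split $G$ according to the threshold $\delta$ in \eqref{1.12}. For $|t| \le \delta$ the quadratic-type bound \eqref{1.12} applies directly. For $|t| > \delta$, since \eqref{1.1.1} makes $h$ bounded, the same elementary estimate used in Lemma \ref{Lemma 3.1} yields $|G(t)| \le C_\delta\, |t|^{N+1}\, e^{\alpha\, |t|^{N'}}$. Writing $v = \rho u$ with $u \in B_0$, these two estimates combined give
\[
\int_\Omega G(v)\, dx \le \frac{1}{N}\, (\lambda_k - \sigma_1)\, \rho^N \int_\Omega |u|^N dx + C_\delta\, \rho^{N+1} \int_\Omega |u|^{N+1}\, e^{\alpha\, \rho^{N'}\, |u|^{N'}} dx.
\]

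Next, I would apply the $B_0$-bound on $\int_\Omega |u|^N dx$ to the first term, producing $\frac{1}{N}(1 - \sigma_1/\lambda_k)\, \rho^N$. For the second term I would repeat the Hölder, Sobolev, and Trudinger-Moser argument from Lemma \ref{Lemma 3.1}: bound $\int |u|^{2(N+1)}$ by a constant via the Sobolev embedding (since $\norm{u} = 1$), and bound $\int e^{2\alpha\rho^{N'}|u|^{N'}}dx$ by a constant via \eqref{1.3} provided $\rho$ is small enough that $2\alpha\rho^{N'} \le \alpha_N$. This yields an $\O(\rho^{N+1})$ term uniformly in $u \in B_0$.

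Combining gives
\[
E(\rho u) \ge \frac{\rho^N}{N} - \frac{1}{N} \left(1 - \frac{\sigma_1}{\lambda_k}\right) \rho^N + \O(\rho^{N+1}) = \frac{\sigma_1}{N \lambda_k}\, \rho^N + \O(\rho^{N+1})
\]
as $\rho \to 0$, uniformly in $u \in B_0$, and fixing $\rho > 0$ small enough makes the infimum strictly positive. There is no real obstacle here: the only thing to be careful about is that the $\O(\rho^{N+1})$ bound must be uniform over $B_0$, which it is because both the Sobolev constant and the Trudinger-Moser bound depend only on $\norm{u} = 1$.
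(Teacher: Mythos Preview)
Your proposal is correct and follows essentially the same approach as the paper's own proof: splitting $G$ via \eqref{1.12} and the large-$|t|$ bound, using $u \in B_0 = \Psi_{\lambda_k}$ to get $\int_\Omega |u|^N dx \le 1/\lambda_k$, and controlling the remainder by H\"older, Sobolev, and the Trudinger--Moser inequality \eqref{1.3} for $\rho^{N'} \le \alpha_N/2\alpha$. Your explicit remark about the uniformity of the $\O(\rho^{N+1})$ term over $B_0$ is a useful clarification that the paper leaves implicit.
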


\begin{proof}
As in the proof of Lemma \ref{Lemma 3.1}, there exists a constant $C_\delta > 0$ such that
\[
|G(t)| \le C_\delta\, |t|^{N+1}\, e^{\alpha\, |t|^{N'}} \quad \text{for } |t| > \delta,
\]
which together with \eqref{1.12} gives
\begin{equation} \label{4.1}
G(t) \le \frac{1}{N}\, (\lambda_k - \sigma_1)\, |t|^N + C_\delta\, |t|^{N+1}\, e^{\alpha\, |t|^{N'}} \quad \forall t.
\end{equation}
For $u \in B_0$ and $\rho > 0$,
\begin{equation}
\int_\Omega |\rho u|^N dx \le \frac{\rho^N}{\lambda_k}
\end{equation}
and
\begin{equation} \label{4.3}
\int_\Omega |\rho u|^{N+1}\, e^{\alpha\, |\rho u|^{N'}} dx \le \rho^{N+1} \left(\int_\Omega |u|^{2\, (N+1)}\, dx\right)^{1/2} \left(\int_\Omega e^{2 \alpha\, \rho^{N'} |u|^{N'}} dx\right)^{1/2}.
\end{equation}
The first integral on the right-hand side of \eqref{4.3} is bounded by the Sobolev embedding theorem, and the second integral is bounded when $\rho^{N'} \le \alpha_N/2 \alpha$ by \eqref{1.3}. So combining \eqref{4.1}--\eqref{4.3} gives
\[
\int_\Omega G(\rho u)\, dx \le \frac{1}{N} \left(1 - \frac{\sigma_1}{\lambda_k}\right) \rho^N + \O(\rho^{N+1}) \quad \text{as } \rho \to 0.
\]
Then
\[
E(\rho u) \ge \frac{1}{N}\, \frac{\sigma_1}{\lambda_k}\, \rho^N + \O(\rho^{N+1}),
\]
and the desired conclusion follows from this for sufficiently small $\rho$.
\end{proof}

We may assume without loss of generality that $B_d(0) \subset \Omega$. Let $\seq{\omega_j}$ be the sequence of functions defined in \eqref{2.5.1}.

\begin{lemma} \label{Lemma 4.2}
We have
\begin{enumroman}
\item \label{Lemma 4.2 (i)} $E(sv) \le 0 \hquad \forall v \in A_0,\, s \ge 0$,
\item \label{Lemma 4.2 (ii)} for all $j \ge 2$,
    \[
    \sup \set{E(R\, \pi((1 - t)\, v + t \omega_j)) : v \in A_0,\, 0 \le t \le 1} \to - \infty$ as $R \to \infty,
    \]
\item \label{Lemma 4.2 (iii)} $\exists j_0 \ge 2$ such that
    \[
    \sup \set{E(sv + t \omega_{j_0}) : v \in A_0,\, s, t \ge 0} < \frac{1}{N} \left(\frac{\alpha_N}{\alpha}\right)^{N-1}.
    \]
\end{enumroman}
\end{lemma}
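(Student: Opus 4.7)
For part (i), my plan is to compute directly. By Proposition \ref{Proposition 2.4}, $A_0 = C \incl \Psi^{\lambda_{k-1}}$, so each $v \in A_0$ satisfies $\norm{v} = 1$ and $\int_\Omega |v|^N\, dx \ge 1/\lambda_{k-1}$. Combined with hypothesis \eqref{1.11}, this yields
\[
E(sv) = \frac{s^N}{N} - \int_\Omega G(sv)\, dx \le \frac{s^N}{N} - \frac{\lambda_{k-1}+\sigma_0}{N}\, s^N \int_\Omega |v|^N\, dx \le -\frac{\sigma_0\, s^N}{N\, \lambda_{k-1}} \le 0.
\]
For part (ii), first I would note that for $j$ sufficiently large $\pi((1-t)v + t\omega_j)$ is well-defined for all $v \in A_0$ and $t \in [0,1]$: if $(1-t)v + t\omega_j = 0$ for some $t \in (0,1)$, then $\norm{v} = \norm{\omega_j} = 1$ would force $v = -\omega_j$, which is impossible since $A_0$ is bounded in $C^1(\closure{\Omega})$ while $\norm[\infty]{\omega_j} \to \infty$ by \eqref{2.5.1}. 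The set $K_j = \set{\pi((1-t)v + t\omega_j) : v \in A_0,\, 0 \le t \le 1}$ is then a compact subset of $S$, so $m_j := \inf_{u \in K_j} \int_\Omega |u|^{2N}\, dx > 0$. Applying \eqref{3.8} uniformly for $u \in K_j$ (with any fixed $0 < \eps < \beta$) then gives
\[
E(Ru) \le \frac{R^N}{N} - \frac{(2N-1)\, (\beta-\eps)\, \alpha^{2N-2}}{(2N)!}\, R^{2N}\, m_j + \O(R),
\]
which tends to $-\infty$ as $R \to \infty$, uniformly in $u \in K_j$.

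Part (iii) is the main difficulty, and I would argue by contradiction, following the template of Lemma \ref{Lemma 3.2}\ref{Lemma 3.2 (ii)}. If the conclusion fails then for every $j \ge 2$ there exist $v_j \in A_0$ and $s_j, t_j \ge 0$ with
\[
E(s_j v_j + t_j \omega_j) \ge \frac{1}{N}\left(\frac{\alpha_N}{\alpha}\right)^{N-1}.
\]
By the same coercivity used in part (ii), the supremum in $(s,t)$ is attained, and by part (i) one must have $t_j > 0$. Using compactness of $A_0$ in $C^1(\closure{\Omega})$ to pass to a subsequence with $v_j \to v^*$ in $C^1$, together with the fact that $\norm[1]{\nabla \omega_j}$ and $\norm[N-1]{\nabla \omega_j}$ tend to $0$ by Proposition \ref{Proposition 2.6}, all mixed $v_j$--$\omega_j$ cross terms in the expansion of $\norm{s_j v_j + t_j \omega_j}^N$ and in $\int_\Omega G(s_j v_j + t_j \omega_j)\, dx$ become $\o(1)$ along bounded $(s_j, t_j)$. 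This would effectively reduce the problem to the one-parameter situation of Lemma \ref{Lemma 3.2}\ref{Lemma 3.2 (ii)}: one expects $s_j = \O(1)$, $t_j \to t_0 := (\alpha_N/\alpha)^{(N-1)/N}$, and repeating the concentration ball estimate on $B_{d/j}(0)$ and on the annulus $d/j < |x| < d$ would produce a bound
\[
\beta \le \frac{1}{\alpha^{N-1}}\left(\frac{N}{d}\right)^N e^{c/\sigma_0^{N-1}}
\]
for some constant $c > 0$ depending on $\Omega$, $\alpha$, and $k$ (through the $C^1$-bound on $A_0 = C$), contradicting the hypothesis on $\beta$.

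The hard part will be part (iii): the two-parameter maximization goes beyond the template of Lemma \ref{Lemma 3.2}, and one must carefully use the $C^1$-boundedness of $A_0$ from Proposition \ref{Proposition 2.4} to verify that the perturbation $s_j v_j$ contributes only lower-order terms, both in the Dirichlet norm expansion and in the nonlinear integrals over the concentration region. The specific form $c/\sigma_0^{N-1}$ of the exponent is then a consequence of how the quantitative lower bound \eqref{1.11} enters the Moser-sequence estimates near the critical level.
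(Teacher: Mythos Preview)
Your arguments for \ref{Lemma 4.2 (i)} and \ref{Lemma 4.2 (ii)} are correct and match the paper's.

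For \ref{Lemma 4.2 (iii)} your contradiction set-up is right, and you correctly single out the $C^1$-boundedness of $A_0$ as essential. But the plan to ``effectively reduce to the one-parameter situation of Lemma \ref{Lemma 3.2}\ref{Lemma 3.2 (ii)}'' by discarding cross terms as $o(1)$ perturbations has a genuine gap: it cannot produce the exponent $c/\sigma_0^{N-1}$. In Lemma \ref{Lemma 3.2}\ref{Lemma 3.2 (ii)} the parameter $\sigma_0$ enters only through the lower bound on $G$, giving $e^{\sigma_0/(N-1)\kappa}$; here the role of $\sigma_0$ is entirely different, and merely knowing ``$s_j=O(1)$'' is far too weak. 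The paper does \emph{not} treat $s_j v_j$ as a lower-order perturbation. It sets $\tau_j=s_j/t_j$, feeds \eqref{1.11} into the energy inequality $E(u_j)\ge \frac{1}{N}(\alpha_N/\alpha)^{N-1}$, and expands $\norm{\tau_j v_j+\omega_j}^N$ and $\int_\Omega|\tau_j v_j+\omega_j|^N\,dx$ using Proposition \ref{Proposition 2.6} together with the $C^1$-bound on $A_0$ to obtain
\[
\frac{\sigma_0}{\lambda_{k-1}}\,\tau_j^N+\left(\frac{t_0}{t_j}\right)^N\le 1+c_3\sum_{m=1}^N\frac{\tau_j^{N-m}}{(\log j)^{m/N}}.
\]
This single inequality delivers both $\liminf t_j\ge t_0$ and, once $t_j\to t_0$ is established, the quantitative rate $\widetilde\tau_j:=\tau_j(\log j)^{1/N}\le c_8/\sigma_0$. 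The upper bound $\limsup t_j\le t_0$ comes separately from the ray-criticality condition $E'(u_j)\,u_j=0$ (which you do not invoke) combined with the exponential blow-up on $\set{|x|\le d/j}$. It is precisely the rate $\widetilde\tau_j=O(1/\sigma_0)$ that forces the perturbation of the exponent on the concentration ball to be $O\!\big(1/(\sigma_0^{N-1}\log j)\big)$, which after exponentiation yields $e^{c/\sigma_0^{N-1}}$. The annulus estimate from Lemma \ref{Lemma 3.2}\ref{Lemma 3.2 (ii)} plays no role here; this is why the constant $\M$ is absent from Theorem \ref{Theorem 1.3}.
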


\begin{proof}
\ref{Lemma 4.2 (i)} By \eqref{1.11},
\begin{equation} \label{4.4}
E(u) \le \frac{1}{N} \left[\int_\Omega |\nabla u|^N dx - (\lambda_{k-1} + \sigma_0) \int_\Omega |u|^N dx\right].
\end{equation}
For $v \in A_0$ and $s \ge 0$,
\[
\int_\Omega |sv|^N dx \ge \frac{s^N}{\lambda_{k-1}}
\]
since $A_0 \subset \Psi^{\lambda_{k-1}}$, so \eqref{4.4} gives
\[
E(sv) \le - \frac{1}{N}\, \frac{\sigma_0}{\lambda_{k-1}}\, s^N \le 0.
\]

\ref{Lemma 4.2 (ii)} Fix $0 < \eps < \beta$. As in the proof of Lemma \ref{Lemma 3.2} \ref{Lemma 3.2 (i)}, $\exists M_\eps > 0$ such that
\begin{equation} \label{3.16}
th(t)\, e^{\alpha\, |t|^{N'}} > (\beta - \eps)\, e^{\alpha\, |t|^{N'}} \quad \text{for } |t| > M_\eps
\end{equation}
and there exists a constant $C_\eps > 0$ such that
\begin{equation} \label{3.21}
th(t)\, e^{\alpha\, |t|^{N'}} \ge \frac{1}{(2N - 2)!}\, (\beta - \eps)\, \alpha^{2N-2}\, t^{2N} - C_\eps\, |t|
\end{equation}
and
\begin{equation} \label{3.18}
G(t) \ge \frac{2N - 1}{(2N)!}\, (\beta - \eps)\, \alpha^{2N-2}\, t^{2N} - C_\eps\, |t|
\end{equation}
for all $t$. Let $A_1 = \set{\pi((1 - t)\, v + t \omega_j) : v \in A_0,\, 0 \le t \le 1}$. For $u \in A_1$ and $R > 0$, \eqref{3.18} gives
\[
E(Ru) \le \frac{R^N}{N} - \frac{2N - 1}{(2N)!}\, (\beta - \eps)\, \alpha^{2N-2}\, R^{2N} \int_\Omega |u|^{2N} dx + C_\eps R \int_\Omega |u|\, dx.
\]
The set $A_1$ is compact since $A_0$ is compact, so the first integral on the right-hand side is bounded away from zero on $A_1$. Since the second integral is bounded, the desired conclusion follows.

\ref{Lemma 4.2 (iii)} If the conclusion is false, then it follows from \ref{Lemma 4.2 (i)} and \ref{Lemma 4.2 (ii)} that for all $j \ge 2$, there exist $v_j \in A_0,\, s_j \ge 0,\, t_j > 0$ such that
\[
E(s_j v_j + t_j \omega_j) = \sup \set{E(sv + t \omega_j) : v \in A_0,\, s, t \ge 0} \ge \frac{1}{N} \left(\frac{\alpha_N}{\alpha}\right)^{N-1}.
\]
Set $u_j = s_j v_j + t_j \omega_j$. Then
\begin{equation} \label{4.6}
E(u_j) = \frac{1}{N} \norm{u_j}^N - \int_\Omega G(u_j)\, dx \ge \frac{1}{N} \left(\frac{\alpha_N}{\alpha}\right)^{N-1}.
\end{equation}
Moreover, $\tau u_j \in \set{sv + t \omega_j : v \in A_0,\, s, t \ge 0}$ for all $\tau \ge 0$ and $E(\tau u_j)$ attains its maximum at $\tau = 1$, so
\begin{equation} \label{4.7}
\restr{\frac{\partial}{\partial \tau}\, E(\tau u_j)}{\tau = 1} = E'(u_j)\, u_j = \norm{u_j}^N - \int_\Omega u_j\, h(u_j)\, e^{\alpha\, |u_j|^{N'}} dx = 0.
\end{equation}
Since $\norm{v_j} = \norm{\omega_j} = 1$ and $G(t) \ge 0$ for all $t$ by \eqref{1.11}, \eqref{4.6} gives
\[
s_j + t_j \ge t_0,
\]
where
\[
t_0 = \left(\frac{\alpha_N}{\alpha}\right)^{(N-1)/N}.
\]
First we show that $s_j \to 0$ and $t_j \to t_0$ as $j \to \infty$.

Combining \eqref{4.6} with \eqref{1.11} gives
\[
\norm{s_j v_j + t_j \omega_j}^N \ge (\lambda_{k-1} + \sigma_0) \int_\Omega |s_j v_j + t_j \omega_j|^N dx + t_0^N.
\]
Set $\tau_j = s_j/t_j$. Then
\begin{equation} \label{4.11}
\norm{\tau_j v_j + \omega_j}^N \ge (\lambda_{k-1} + \sigma_0) \int_\Omega |\tau_j v_j + \omega_j|^N dx + \left(\frac{t_0}{t_j}\right)^N.
\end{equation}
Since $\seq{v_j}$ is bounded in $C^1(\closure{\Omega})$, Proposition \ref{Proposition 2.6} gives
\begin{multline*}
\norm{\tau_j v_j + \omega_j}^N \le \int_\Omega (\tau_j\, |\nabla v_j| + |\nabla \omega_j|)^N dx = \tau_j^N \int_\Omega |\nabla v_j|^N dx + \int_\Omega |\nabla \omega_j|^N dx\\[7.5pt]
+ \sum_{m=1}^{N-1} \binom{N}{m}\, \tau_j^{N-m} \int_\Omega |\nabla v_j|^{N-m}\, |\nabla \omega_j|^m\, dx \le \tau_j^N + 1 + c_1 \sum_{m=1}^{N-1} \frac{\tau_j^{N-m}}{(\log j)^{m/N}}
\end{multline*}
and
\begin{multline*}
\int_\Omega |\tau_j v_j + \omega_j|^N dx \ge \int_\Omega (\tau_j\, |v_j| - \omega_j)^N dx = \tau_j^N \int_\Omega |v_j|^N dx\\[7.5pt]
+ \sum_{m=1}^N (-1)^m\, \binom{N}{m}\, \tau_j^{N-m} \int_\Omega |v_j|^{N-m}\, \omega_j^m\, dx \ge \frac{\tau_j^N}{\lambda_{k-1}} - c_2 \sum_{m=1}^N \frac{\tau_j^{N-m}}{(\log j)^{m/N}}
\end{multline*}
for some constants $c_1, c_2 > 0$. So \eqref{4.11} gives
\begin{equation} \label{4.12}
\frac{\sigma_0}{\lambda_{k-1}}\, \tau_j^N + \left(\frac{t_0}{t_j}\right)^N \le 1 + c_3 \sum_{m=1}^N \frac{\tau_j^{N-m}}{(\log j)^{m/N}}
\end{equation}
for some constant $c_3 > 0$, which implies that $\seq{\tau_j}$ is bounded and
\begin{equation} \label{4.9.3}
\liminf_{j \to \infty}\, t_j \ge t_0.
\end{equation}

Next combining \eqref{4.7} with \eqref{3.16} and \eqref{3.21} gives
\begin{multline} \label{4.10}
\norm{u_j}^N = \int_{\set{|u_j| > M_\eps}} u_j\, h(u_j)\, e^{\alpha\, |u_j|^{N'}} dx + \int_{\set{|u_j| \le M_\eps}} u_j\, h(u_j)\, e^{\alpha\, |u_j|^{N'}} dx\\[7.5pt]
\ge (\beta - \eps) \int_{\set{|u_j| > M_\eps}} e^{\alpha\, |u_j|^{N'}} dx - C_\eps \int_{\set{|u_j| \le M_\eps}} |u_j|\, dx.
\end{multline}
For $|x| \le d/j$,
\[
|u_j| \ge t_j \omega_j - s_j\, |v_j| \ge \frac{t_j}{\omega_{N-1}^{1/N}} \left[(\log j)^{(N-1)/N} - c_4 \tau_j\right]
\]
for some constant $c_4 > 0$, and the last expression is greater than $M_\eps$ for all sufficiently large $j$ since $\seq{\tau_j}$ is bounded and $\liminf t_j > 0$. So
\begin{multline*}
\int_{\set{|u_j| > M_\eps}} e^{\alpha\, |u_j|^{N'}} dx \ge e^{\alpha\, t_j^{N'} [(\log j)^{(N-1)/N} - c_4 \tau_j]^{N'}/\omega_{N-1}^{1/(N-1)}} \int_{\set{|x| \le d/j}} dx\\[7.5pt]
= \frac{\omega_{N-1}\, d^N}{N}\, j^{\alpha\, [t_j^{N'} (1 - c_4 \tau_j/(\log j)^{(N-1)/N})^{N'} - t_0^{N'}]/\omega_{N-1}^{1/(N-1)}}
\end{multline*}
for large $j$. On the other hand,
\[
\int_{\set{|u_j| \le M_\eps}} |u_j|\, dx \le \int_\Omega (s_j\, |v_j| + t_j \omega_j)\, dx \le c_5\, t_j \left[\tau_j + \frac{1}{(\log j)^{1/N}}\right]
\]
for some constant $c_5 > 0$ by Proposition \ref{Proposition 2.6}. So \eqref{4.10} gives
\begin{multline} \label{4.15}
(\beta - \eps)\, j^{\alpha\, [t_j^{N'} (1 - c_4 \tau_j/(\log j)^{(N-1)/N})^{N'} - t_0^{N'}]/\omega_{N-1}^{1/(N-1)}} \le \frac{N t_j^N (\tau_j + 1)^N}{\omega_{N-1}\, d^N}\\[7.5pt]
+ c_6\, t_j \left[\tau_j + \frac{1}{(\log j)^{1/N}}\right]
\end{multline}
for some constant $c_6 > 0$. Since $\seq{\tau_j}$ is bounded, it follows from this that
\[
\limsup_{j \to \infty}\, t_j \le t_0,
\]
which together with \eqref{4.9.3} shows that $t_j \to t_0$. Then \eqref{4.12} implies that $\tau_j \to 0$, so $s_j = \tau_j\, t_j \to 0$.

Now we show that there exists a constant $c > 0$ depending only on $\Omega$, $\alpha$, and $k$ such that
\begin{equation} \label{4.16}
\beta \le \frac{1}{\alpha^{N-1}} \left(\frac{N}{d}\right)^N\! e^{c/\sigma_0^{N-1}}.
\end{equation}
The right-hand side of \eqref{4.15} goes to $(N/d)^N/\alpha^{N-1}$ as $j \to \infty$. If $\beta \le (N/d)^N/\alpha^{N-1}$, then we may take any $c > 0$, so suppose $\beta > (N/d)^N/\alpha^{N-1}$. Then for $\eps < \beta - (N/d)^N/\alpha^{N-1}$ and all sufficiently large $j$, \eqref{4.15} gives $j^{\alpha\, [t_j^{N'} (1 - c_4 \tau_j/(\log j)^{(N-1)/N})^{N'} - t_0^{N'}]/\omega_{N-1}^{1/(N-1)}} \le 1$, so
\[
\frac{t_0}{t_j} \ge 1 - \frac{c_4 \tau_j}{(\log j)^{(N-1)/N}}.
\]
Combining this with \eqref{4.12} gives
\[
\frac{\sigma_0}{\lambda_{k-1}}\, \tau_j^N - \frac{N c_4 \tau_j}{(\log j)^{(N-1)/N}} \le c_3 \sum_{m=1}^N \frac{\tau_j^{N-m}}{(\log j)^{m/N}},
\]
so
\[
\sigma_0 \tau_j^N \le c_7 \sum_{m=1}^N \frac{\tau_j^{N-m}}{(\log j)^{m/N}}
\]
for some constant $c_7 > 0$. Set $\widetilde{\tau}_j = \tau_j\, (\log j)^{1/N}$. Then
\begin{equation} \label{4.18}
\sigma_0 \widetilde{\tau}_j^N \le c_7 \sum_{m=1}^N \widetilde{\tau}_j^{N-m}.
\end{equation}
We claim that
\begin{equation} \label{4.17}
\widetilde{\tau}_j \le \frac{c_8}{\sigma_0}
\end{equation}
for some constant $c_8 > 0$. Taking $\sigma_0$ smaller in \eqref{1.11} if necessary, we may assume that $\sigma_0 \le 1$. So if $\widetilde{\tau}_j < 1$, then \eqref{4.17} holds with $c_8 = 1$, so suppose $\widetilde{\tau}_j \ge 1$. Then \eqref{4.18} gives \eqref{4.17} with $c_8 = N c_7$. Now \eqref{4.12} gives
\[
\left(\frac{t_0}{t_j}\right)^N \le 1 + \frac{c_3}{\log j}\, \sum_{m=1}^N \widetilde{\tau}_j^{N-m} \le 1 + \frac{c_9}{\sigma_0^{N-1} \log j}
\]
for some constant $c_9 > 0$, so
\[
\left(\frac{t_0}{t_j}\right)^{N'} \le \left(1 + \frac{c_9}{\sigma_0^{N-1} \log j}\right)^{1/(N-1)} \le 1 + \frac{c_9}{\sigma_0^{N-1} \log j}.
\]
Then
\begin{multline*}
t_j^{N'} \left[1 - \frac{c_4 \tau_j}{(\log j)^{(N-1)/N}}\right]^{N'} - t_0^{N'} = t_j^{N'} \left[\left(1 - \frac{c_4 \widetilde{\tau}_j}{\log j}\right)^{N'} - \left(\frac{t_0}{t_j}\right)^{N'}\right]\\[7.5pt]
\ge t_j^{N'} \left[\left(1 - \frac{c_{10}}{\sigma_0 \log j}\right)^{N'} - \left(1 + \frac{c_9}{\sigma_0^{N-1} \log j}\right)\right] \ge - t_j^{N'} \left(\frac{N' c_{10}}{\sigma_0 \log j} + \frac{c_9}{\sigma_0^{N-1} \log j}\right)\\[7.5pt]
\ge - \frac{c_{11}}{\sigma_0^{N-1} \log j}
\end{multline*}
for some constants $c_{10}, c_{11} > 0$, so
\[
j^{\alpha\, [t_j^{N'} (1 - c_4 \tau_j/(\log j)^{(N-1)/N})^{N'} - t_0^{N'}]/\omega_{N-1}^{1/(N-1)}} \ge j^{- c/\sigma_0^{N-1} \log j} = e^{- c/\sigma_0^{N-1}}
\]
for some constant $c > 0$. Combining this with \eqref{4.15} and passing to the limit gives
\[
(\beta - \eps)\, e^{- c/\sigma_0^{N-1}} \le \frac{1}{\alpha^{N-1}} \left(\frac{N}{d}\right)^N,
\]
and letting $\eps \to 0$ gives \eqref{4.16}.
\end{proof}

We are now ready to prove Theorem \ref{Theorem 1.3}.

\begin{proof}[Proof of Theorem \ref{Theorem 1.3}]
Let $j_0 \ge 2$ be as in Lemma \ref{Lemma 4.2} \ref{Lemma 4.2 (iii)}. By Lemma \ref{Lemma 4.2} \ref{Lemma 4.2 (ii)}, $\exists R > \rho$ such that
\begin{equation} \label{4.19}
\sup \set{E(R\, \pi((1 - t)\, v + t \omega_{j_0})) : v \in A_0,\, 0 \le t \le 1} \le 0,
\end{equation}
where $\rho > 0$ is as in Lemma \ref{Lemma 4.1}. Let
\begin{gather*}
A = \set{sv : v \in A_0,\, 0 \le s \le R} \cup \set{R\, \pi((1 - t)\, v + t \omega_{j_0}) : v \in A_0,\, 0 \le t \le 1},\\[7.5pt]
X = \set{sv + t \omega_{j_0} : v \in A_0,\, s, t \ge 0,\, \norm{sv + t \omega_{j_0}} \le R}.
\end{gather*}
Combining Lemma \ref{Lemma 4.2} \ref{Lemma 4.2 (i)}, \eqref{4.19}, and Lemma \ref{Lemma 4.1} gives
\begin{equation} \label{4.20}
\sup E(A) \le 0 < \inf E(B),
\end{equation}
while Lemma \ref{Lemma 4.2} \ref{Lemma 4.2 (iii)} gives
\begin{equation} \label{4.21}
\sup E(X) \le \sup \set{E(sv + t \omega_{j_0}) : v \in A_0,\, s, t \ge 0} < \frac{1}{N} \left(\frac{\alpha_N}{\alpha}\right)^{N-1}.
\end{equation}
Let
\[
\Gamma = \set{\gamma \in C(X,W) : \gamma(X) \text{ is closed and} \restr{\gamma}{A} = \id{A}},
\]
and set
\[
c := \inf_{\gamma \in \Gamma}\, \sup_{u \in \gamma(X)}\, E(u).
\]
By Theorem \ref{Theorem 2.2}, $\inf E(B) \le c \le \sup E(X)$, and $E$ has a \PS{c} sequence. By \eqref{4.20} and \eqref{4.21},
\[
0 < c < \frac{1}{N} \left(\frac{\alpha_N}{\alpha}\right)^{N-1},
\]
so a subsequence of this \PS{c} sequence converges weakly to a nontrivial solution of problem \eqref{1.1} by Proposition \ref{Proposition 2.1}.
\end{proof}

\section{Competing interests declaration}

The authors declare no competing interests.

\def\cdprime{$''$}

\end{document}